\documentclass[amsart]{amsart}
\usepackage{amsmath,amssymb,amsthm}
\usepackage{mathrsfs}
\usepackage{graphicx}
\usepackage{tikz-cd}
\usepackage{enumerate}
\usepackage{cite}
\usepackage{url}
\usepackage{amsfonts}
\usepackage{latexsym}
\usepackage{enumitem}
\usepackage{mathrsfs}
\usepackage[all]{xy} \SelectTips{eu}{}
\usepackage{enumerate}
\usepackage{hyperref}
\usepackage{color}

\numberwithin{equation}{section}
\newtheorem{theorem}{Theorem}[section]
\newtheorem{lemma}[theorem]{Lemma}
\newtheorem{proposition}[theorem]{Proposition}
\newtheorem{corollary}[theorem]{Corollary}
\newtheorem{definition}[theorem]{Definition}
\newtheorem{remark}[theorem]{Remark}

\newcommand{\Gen}{\operatorname{Gen}}
\newcommand{\Cogen}{\operatorname{Cogen}}
\newcommand{\Coker}{\operatorname{Coker}}
\newcommand{\Tor}{\operatorname{Tor}}
\newcommand{\Ext}{\operatorname{Ext}}
\newcommand{\Hom}{\operatorname{Hom}}

\newcommand{\id}{\operatorname{id}}
\newcommand{\pd}{\operatorname{pd}}

\newcommand{\Mod}{\operatorname{Mod}}
\newcommand{\Ker}{\operatorname{Ker}}

\newcommand{\A}{\mathcal{A}}
\newcommand{\B}{\mathcal{B}}
\def\Id{\mathrm{Id}}
\newcommand{\Proj}{\operatorname{Proj}}

\begin{document}

\title{Silting and tilting objects in cleft extensions of abelian categories}
\author{Guoqiang Zhao}
\address{Department of Mathmatics, Hangzhou Dianzi University, Hangzhou 310018, China}
\email{gqzhao@hdu.edu.cn}

\author{Juxiang Sun}
\address{School of Mathematics and Statistics, Shangqiu Normal University, Shangqiu 476000, China}
\email{Sunjx8078@163.com}

\begin{abstract} 
We establish connections between silting and tilting objects in an abelian category $\mathcal{B}$
and those in a cleft extension $\mathcal{A}$ of $\mathcal{B}$, which provides a method for constructing more silting and tilting objects.
Then we apply our results to the cleft extensions of module categories,  and characterize silting and tilting modules over  $\theta$-extension of rings. Some known results over trivial extension of rings are extended and strengthened.
\end{abstract}

\maketitle

\noindent {\bf Keywords}: (Co)silting object; tilting object; cleft extension; $\theta$-extension, $\tau$-tilting module

\noindent {\bf Mathematics Subject Classification 2020:} 16D90, 16G10, 18G10, 18G25

\section{Introduction}
The study of cleft extension of rings is quite essential and has interesting applications to classical ring theory, since this class of ring extensions unifies large parts of rings, such as trivial extension rings and in particular triangular matrix rings, tensor rings and positively graded rings and so on. 
The concept of cleft extension of abelian categories was
introduced by Beligiannis \cite{Bel00} as a natural generalization of cleft extension of rings and
trivial extension of abelian categories defined by Fossum-Griffith-Reiten \cite{FGR75}.
The cleft extension of abelian category provide a general setting and unified treatment for the study of cleft extension of rings.
Moreover, the framework of cleft extension was used to reduce some homological properties of rings, such as the finitistic dimensions \cite{EGPS22, GPS21}, cotorsion pairs \cite{H25},
injective generations \cite{KP25}, the Igusa-Todorov distances and
extension dimensions \cite{MZL25}, the Gorenstein weak global (flat-cotorsion) dimensions
and relative singularity categories \cite{LMY25},
the Gorenstein projective modules, singularity categories and Gorensteinness \cite{EPS22, K26}.

On the other hand, the silting theory, originated from the work of Keller and Vossieck \cite{KV88}, 
seems to be the tilting theory in the level of derived categories. 
Silting complexes\cite{KV88, W13} and cosilting complexes\cite{ZW17}, can be understood as derived analogues of tilting and cotilting modules. 
In \cite{AMV16}, the notion of silting modules was introduced by Angeleri H\"ugel, Marks and Vit\'oria, 
as a common generalization of 1-tilting modules over an arbitrary ring and $\tau$-tilting modules over a finite dimensional algebra due to Adachi, Iyama and Reiten \cite{AIR14}. 
It was shown that silting modules are in bijection with 2-term silting complexes and with certain t-structures and co-t-structures in the derived module category.
Dually, the notion of cosilting modules, was introduced by Breaz, Pop \cite{BP17}, Zhang and Wei \cite{ZW17}. 
Many important properties of these modules were presented in the past few years (see \cite{A19, AH17, AMV17, BZ18, CTT97, MV18, P17}). 
Subjects related to transfer of silting modules were also studied.
For instance, silting modules have been studied over formal triangular matrix rings \cite{GH20}, over trivial ring extensions\cite{M22}, and faithfully flat ring extensions\cite{AC22,B20}. 

The main aim of the present paper is to investigate silting and $n$-tilting objects over cleft extensions of abelian categories. The paper is organized as follows:

In Section 2, we recall some basic concepts and facts.

Suppose that an abelian category $\A$ is a cleft extension of an abelian category $\B$. 
In Section 3, we study how to construct (partial) silting and tilting objects of $\A$ from those objects of $\B$, respectively (see Theorems 3.3 and 3.5). 
The transfers from (partial) silting objects of $\A$ to $\B$ are also discussed (see Theorem 3.8).

Section 4 is devoted to applications in cleft extension of module categories. In particular,
We obtain characterizations of (partial) silting and tilting modules over $\theta$-extensions and tensor rings. Surprisingly, when the $\theta$-extension is induced by a finite dimensional algebra and a finitely generated bimodule, we can obtain fruitful results, especially for $\tau$-rigid and support $\tau$-tilting modules. Some known results over trivial ring extensions, including triangular matrix rings and Morita context rings, are extended.

\section{Preliminaries and notations}

Throughout this paper, we always assume that $\A$ and $\B$ are abelian categories with enough projective and injective objects. For an object $X$ in $\A$, denote by $\pd_{\A}(X)$ and $\id_{\A}(X)$ the projective and injective dimensions of $X$, respectively.

Let $X\xrightarrow{\sigma}Y$ be a morphism in $\A$. Write
\begin{gather*}
\mathcal{D}_{\sigma} =\{L\in\A: \Hom_{\A}(Y,L)\xrightarrow{\Hom_{\A}(\sigma,L)}\Hom_{\A}(X,L)\text{ is an epimorphism}\};\\
\mathcal{C}_{\sigma} =\{L\in \A: \Hom_{\A}(L,X)\xrightarrow{\Hom_{\A}(L,\sigma)}\Hom_{\A}(L,Y)\text{ is an epimorphism}\};\\
\end{gather*}

We remark that if $\sigma$ is a map in $\Proj(\A)$, it is standard to verify that $\mathcal{D}_{\sigma}$ is closed under epimorphic images, extensions and product.

Given $X\in\A$,  we denote by $\Gen(X)$ the class consisting of quotients of coproducts of copies of $X$ and $\Cogen(X)$ the class consisting of subobjects of products of copies of $X$. Write $X^{\perp_{n}}=\{Y\in\A: \Ext^{j}_{\A}(X, Y)=0$ for all $1\leqslant j\leqslant n \}$, and
$X^{\perp}=\{Y\in\A: \Ext^{j}_{\A}(X, Y)=0$ for all $j\geqslant 1\}$,

Let $P_{1}\xrightarrow{\sigma}P_{0}\to X\to 0$ be a projective presentation of $X$. Analogous to the definition of (partial) silting modules in \cite{AMV16}, $X$ is called a {\it partial silting object} with respect to $\sigma$ if $X\in\mathcal{D}_{\sigma}$ and $\mathcal{D}_{\sigma}$ is a torsion class (equivalently, $\mathcal{D}_{\sigma}$ is closed under coproduct by a similar argument to \cite[Remark 3.11]{AMV16}). 
$X$ is called a {\it silting object} with respect to $\sigma$ if $\Gen(X)=\mathcal{D}_{\sigma}$.

Dually, Let $0\to X\to E^{0}\xrightarrow{\xi}E^{1}$ be an injective copresentation of $X$. $X$ is called partial cosilting with respect to $\xi$ if $X\in\mathcal{C}_{\xi}$ and $\mathcal{C}_{\xi}$ is a torsionfree class. $X$ is called cosilting with respect to $\xi$ if $\Cogen(X)=\mathcal{C}_{\xi}$.

We first present some properties of adjoint pairs.

\begin{lemma} \label{lem:2.1}
Suppose that the functor $\mathbb{G}: \A\rightarrow\B$ has a left adjoint functor $\mathbb{F}$. 
\begin{enumerate}
\item Given $\sigma\in\Hom_{\B}(B_{1}, B_{2})$, then , for every $A\in\mathcal{A}$,
$\Hom_{\A}(\mathbb{F}(\sigma), A)$ is epic (resp. monic)
if and only if $\Hom_{\B}(\sigma$, $\mathbb{G}(A))$ is epic (resp. monic). 

\item Let $\nu\colon\mathsf{Id}_{\mathcal{B}}\rightarrow\mathbb{GF}$ be the unit and 
$\mu\colon\mathbb{FG}\rightarrow \Id_{\mathcal{A}}$ the counit of the adjoint pair $(\mathbb{F}, \mathbb{G})$.
If $\mathbb{G}$ is faithful exact, then, for every $A\in\mathcal{A}$,
$\mu_{A}\colon \mathbb{FG}(A)\rightarrow A$ is an epimorphism. 
\end{enumerate}
\end{lemma}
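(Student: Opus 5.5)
Both parts follow from the adjunction isomorphism $\eta_{B,A}\colon \Hom_{\A}(\mathbb{F}(B),A)\cong\Hom_{\B}(B,\mathbb{G}(A))$, natural in $B\in\B$ and $A\in\A$.

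For (1), I will fix $A\in\A$ and use naturality in the first variable applied to $\sigma\colon B_{1}\to B_{2}$. This gives a commutative square
\[
\begin{array}{ccc}
\Hom_{\A}(\mathbb{F}(B_{2}),A) & \xrightarrow{\Hom_{\A}(\mathbb{F}(\sigma),A)} & \Hom_{\A}(\mathbb{F}(B_{1}),A)\\
\downarrow{\scriptstyle \eta_{B_{2},A}} & & \downarrow{\scriptstyle \eta_{B_{1},A}}\\
\Hom_{\B}(B_{2},\mathbb{G}(A)) & \xrightarrow{\Hom_{\B}(\sigma,\mathbb{G}(A))} & \Hom_{\B}(B_{1},\mathbb{G}(A))
\end{array}
\]
whose vertical maps are bijections (isomorphisms of abelian groups). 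A map of sets is surjective (resp. injective) if and only if its conjugate by bijections is. Hence the top row is epic (resp. monic) exactly when the bottom row is, which is (1). The only thing to check is naturality, which is part of the definition of an adjunction.

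For (2), fix $A$ and consider the cokernel $\pi\colon A\to C$ of $\mu_{A}$; it suffices to show $C=0$. I will use the triangle identity $\mathbb{G}(\mu_{A})\circ\nu_{\mathbb{G}(A)}=\id_{\mathbb{G}(A)}$. This shows $\mathbb{G}(\mu_{A})$ is a split epimorphism in $\B$. Since $\mathbb{G}$ is exact, it preserves cokernels, so $\mathbb{G}(C)=\Coker\mathbb{G}(\mu_{A})=0$. Then $\mathbb{G}(\id_{C})=0=\mathbb{G}(0_{C})$, and faithfulness gives $\id_{C}=0$, so $C=0$ and $\mu_{A}$ is an epimorphism.

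Alternatively, one can avoid cokernels: if $f\circ\mu_{A}=0$, then $\mathbb{G}(f)\circ\mathbb{G}(\mu_{A})=0$. Since $\mathbb{G}(\mu_{A})$ is split epi, $\mathbb{G}(f)=0$, and faithfulness gives $f=0$. This version needs only faithfulness, so exactness is not even required. Both parts are routine; the only subtlety is remembering to invoke the triangle identity.
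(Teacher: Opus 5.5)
Your proposal is correct and follows the paper's proof essentially verbatim. For (1) you use the same adjunction naturality square. For (2) you use the same triangle identity $\mathbb{G}(\mu_{A})\nu_{\mathbb{G}(A)}=\Id_{\mathbb{G}(A)}$ to see that $\mathbb{G}(\mu_{A})$ is split epic, and you spell out the final reflection step that the paper leaves implicit; your remark that faithfulness alone suffices for it (faithful functors reflect epimorphisms) is accurate.
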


\begin{proof}

(1) Since $(\mathbb{F}, \mathbb{G})$ is an adjoint pair, we get the following commutative diagram
\[
\begin{array}{ccc}
\Hom_{\A}(\mathbb{F}(B_{2}), A) & \xrightarrow{\Hom_{\A}(\mathbb{F}(\sigma), A)} & \operatorname{Hom}_{\A}(\mathbb{F}(B_{1}), A) \\
\cong \Bigg\downarrow & & \Bigg\downarrow \cong \\
\operatorname{Hom}_{\B}(B_{2}, \mathbb{G}(A)) & \xrightarrow{\Hom_{\B}(\sigma, \mathbb{G}(A))} & \operatorname{Hom}_{\B}(B_{1}, \mathbb{G}(A))
\end{array}
\]
So we obtain the assertion.

(2) For every $A\in\mathcal{A}$ and $B\in\mathcal{B}$, the following relations are satisfied: 
 \[
 \mathsf{Id}_{\mathbb{F}(B)}=\mu_{\mathbb{F}(B)}\mathbb{F}(\nu_B) \  \text{ and } \  \mathsf{Id}_{\mathbb{G}(A)}=\mathbb{G}(\mu_A)\nu_{\mathbb{G}(A)}.
 \]
which implies that $\mathbb{G}(\mu_{A})$ is a split epimorphism.
Since $\mathbb{G}$ is faithful exact, it follows that $\mu_{A}\colon \mathbb{FG}(A)\rightarrow A$ is an epimorphism for every $A\in\mathcal{A}$.
\end{proof}

\begin{definition}\textnormal{(\!\!\!\cite{Bel00})}
{\rm 
A {\it cleft extension} of an abelian category $\mathcal{B}$
is an abelian category $\mathcal{A}$
together with functors $$\xymatrix@!=4pc{ \mathcal{B} \ar[r]|{i} & \mathcal{A}
			 \ar[r]|{e} & \mathcal{B}
			\ar@/_1pc/[l]|{l} }  $$
such that the functor $e$ is faithful exact and admits a left adjoint $l$, and there is a natural isomorphism
$ei \simeq \Id_{\mathcal{B}}$.
}
\end{definition}

From now on we will denote a cleft extension by $(\mathcal{B},\mathcal{A},i,e,l)$, and it
gives rise to additional properties. For instance, it follows that the functor $i$ is fully
faithful and exact (see \cite[Lemma 2.2]{Bel00}, \cite[Lemma 2.2(ii)]{GPS21}).  Moreover, there is a
functor $q: \mathcal{A} \rightarrow \mathcal{B}$, which is left adjoint of $i$ (see \cite[Proposition 2.3]{Bel00}
and \cite[Lemma 2.2(iv)]{GPS21}). Then, $(ql, ei)$ is an adjoint pair and since $ei \simeq \Id_{\mathcal{B}}$,
it follows that $ql \simeq \Id_{\mathcal{B}}$. We mention that the functors $l$ and $q$ are both right exact and preserve coproduct.

Denote by $\nu\colon\mathsf{Id}_{\mathcal{B}}\rightarrow\mathsf{el}$ the unit and by $\mu\colon\mathsf{le}\rightarrow \mathsf{Id}_{\mathcal{A}}$ the counit of the adjoint pair $(\mathsf{l},\mathsf{e})$. 
It follows from Lemma 2.1(2) that 
there is an exact sequence of functors
$$0\rightarrow G\rightarrow le\xrightarrow{\mu} \Id_{\mathcal{A}}\rightarrow 0\qquad(2.1)$$
where $G$ is an endofunctor of $\A$ defined by $G(A) = \ker \mu_{A}$.

Because $e(\mu_{i(B)})$ is a split epimorphism for every $B\in\mathcal{B}$ and $ei \cong \Id_{\mathcal{B}}$, one has a split exact sequence
$$0\rightarrow F\rightarrow el\rightarrow \Id_{\mathcal{B}}\rightarrow 0,\qquad(2.2)$$
where $F= eGi$ is an endofunctor of $\B$ ( for details see \cite[Section 2]{GPS21}).


In the theory of trivial extensions of abelian categories, there is the dual notion of trivial coextensions (or ``left" trivial extensions). Likewise, there is a dual notion to cleft extensions which we recall below. 

\begin{definition}\textnormal{(\!\!\!\cite{Bel00})}
{\rm
A {\it cleft coextension} of an abelian category $\mathcal{B}$
is an abelian category $\mathcal{A}$
together with functors $$\xymatrix@!=4pc{ \mathcal{B} \ar[r]|{i} & \mathcal{A}
			 \ar[r]|{e} & \mathcal{B}
			\ar@/^1.8pc/[l]_-{r} }  $$
such that the functor $e$ is faithful exact and admits a right adjoint $r$, and there is a natural isomorphism
$ei \simeq \Id_{\mathcal{B}}$.
}
\end{definition}

From now on we will denote a cleft coextension by $(\mathcal{B},\mathcal{A}, i, e, r)$. Like with cleft extensions, there is more structure that arises from the above information. For example, it turns out that $i$ is fully faithful exact and that there is a functor $p\colon\mathcal{A}\rightarrow \mathcal{B}$ such that $(i, p)$ is an adjoint pair. Necessarily we have that $pr\simeq\mathsf{Id}_{\mathcal{B}}$. There are endofunctors $\mathsf{G}'\colon\mathcal{A}\rightarrow\mathcal{A}$ and $\mathsf{F}'\colon\mathcal{B}\rightarrow\mathcal{B}$ that appear in the following short exact sequences: 
\[
0\rightarrow \mathsf{Id}_{\mathcal{A}}\rightarrow \mathsf{re}\rightarrow\mathsf{G}'\rightarrow 0 \  \text{ and }  \ 0\rightarrow \mathsf{Id}_{\mathcal{B}}\rightarrow \mathsf{er}\rightarrow \mathsf{F'}\rightarrow 0.
\]
The second one splits. 

All the results about cleft extensions have a dual translation to results about cleft coextensions. We study only cleft extensions.

\section{Silting objects in cleft extensions of abelian categories}

\subsection{Lifting silting and tilting objects of $\B$ to $\A$}


\begin{lemma} \label{lem:3.1} \textnormal{(\!\!\!\cite[Lemma 2.3]{K26})}
Let $(\mathcal{B},\mathcal{A}, i,e,l)$ be a cleft extension.
$A\in\A$ is projective if and only if $A$ is a direct summand of $l(P)$ for some $P\in\Proj(\B)$.
\end{lemma}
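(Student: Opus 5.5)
The plan is to prove the two implications separately. The key inputs are the adjunction $(l,e)$ with $e$ exact, and the counit epimorphism $\mu$ from Lemma \ref{lem:2.1}(2).

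\emph{Sufficiency.} Let $P\in\Proj(\B)$. Because $(l,e)$ is an adjoint pair, there is a natural isomorphism $\Hom_{\A}(l(P),-)\cong\Hom_{\B}(P,e(-))$. The functor $e$ is exact and $\Hom_{\B}(P,-)$ is exact since $P$ is projective, so the composite $\Hom_{\B}(P,e(-))$ is exact. Hence $\Hom_{\A}(l(P),-)$ is exact, and $l(P)$ is projective in $\A$. Projectives are closed under direct summands, so every direct summand of $l(P)$ is projective.

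\emph{Necessity.} Let $A\in\A$ be projective. Since $\B$ has enough projectives, choose an epimorphism $\pi\colon P\to e(A)$ with $P\in\Proj(\B)$. Consider the composite
\[
l(P)\xrightarrow{\,l(\pi)\,} le(A)\xrightarrow{\,\mu_A\,} A .
\]
The map $l(\pi)$ is an epimorphism because $l$ is a left adjoint and therefore right exact. The map $\mu_A$ is an epimorphism by Lemma \ref{lem:2.1}(2), which applies because $e$ is faithful exact. So the composite is an epimorphism onto the projective object $A$, and hence it splits. Therefore $A$ is a direct summand of $l(P)$.

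\emph{Main obstacle.} The only substantive point is showing that $\mu_A$ is epic, and Lemma \ref{lem:2.1}(2) already provides this. Beyond that, one needs only that left adjoints preserve epimorphisms, which is standard. No further difficulty is expected.
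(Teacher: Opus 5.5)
Your proof is correct and complete. The sufficiency step holds because $\Hom_{\A}(l(P),-)\cong\Hom_{\B}(P,e(-))$ is exact when $e$ is exact. The necessity step holds because the composite $\mu_A\circ l(\pi)\colon l(P)\to A$ is an epimorphism onto a projective and therefore splits.

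The paper gives no proof of its own here; it only cites \cite[Lemma 2.3]{K26}. Your argument is the standard one and uses the same ingredients the paper relies on elsewhere: Lemma~\ref{lem:2.1}(2), and the epimorphism $\mu_A\circ l(g)$ in the proof of Theorem~\ref{thm:3.3}.
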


\begin{lemma} \label{lem:3.2}
Let $(\mathcal{B},\mathcal{A},i,e,l)$ be a cleft extension,
and $B_{1}\xrightarrow{\sigma}B_{2}$ be a morphism in $\mathcal{B}$. 
For $A\in\A$, the following hold:
\begin{enumerate}
\item $A\in\mathcal{D}_{l(\sigma)}$ if and only if $e(A)\in\mathcal{D}_{\sigma}$.

\item If $i$ preserves coproduct and $\mathcal{D}_{l(\sigma)}$ is closed under coproduct, then $\mathcal{D}_{\sigma}$ is so;
If $e$ preserves coproduct and $\mathcal{D}_{\sigma}$ is closed under coproduct, then $\mathcal{D}_{l(\sigma)}$ is so.
\end{enumerate}
\end{lemma}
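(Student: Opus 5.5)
Part (1) follows from Lemma~\ref{lem:2.1}(1) applied to the adjoint pair $(l,e)$, with $\mathbb{F}=l$ and $\mathbb{G}=e$. For $A\in\A$ the adjunction isomorphisms give a commutative square identifying $\Hom_{\A}(l(\sigma),A)\colon\Hom_{\A}(l(B_{2}),A)\to\Hom_{\A}(l(B_{1}),A)$ with $\Hom_{\B}(\sigma,e(A))\colon\Hom_{\B}(B_{2},e(A))\to\Hom_{\B}(B_{1},e(A))$. Hence one is an epimorphism exactly when the other is, and this is precisely the statement $A\in\mathcal{D}_{l(\sigma)}\iff e(A)\in\mathcal{D}_{\sigma}$.

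For the second assertion of (2), assume $e$ preserves coproducts and $\mathcal{D}_{\sigma}$ is closed under coproducts. Let $\{A_{j}\}$ be a family in $\mathcal{D}_{l(\sigma)}$. By (1) each $e(A_{j})\in\mathcal{D}_{\sigma}$, so $\coprod e(A_{j})\in\mathcal{D}_{\sigma}$. Since $e(\coprod A_{j})\cong\coprod e(A_{j})$ and $\mathcal{D}_{\sigma}$ is closed under isomorphisms, $e(\coprod A_{j})\in\mathcal{D}_{\sigma}$, and (1) gives $\coprod A_{j}\in\mathcal{D}_{l(\sigma)}$.

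For the first assertion of (2), assume $i$ preserves coproducts and $\mathcal{D}_{l(\sigma)}$ is closed under coproducts. The key observation is that for $B\in\B$ we have $B\in\mathcal{D}_{\sigma}\iff i(B)\in\mathcal{D}_{l(\sigma)}$. Indeed, by (1), $i(B)\in\mathcal{D}_{l(\sigma)}$ iff $ei(B)\in\mathcal{D}_{\sigma}$, and $ei(B)\cong B$ by the natural isomorphism $ei\simeq\Id_{\B}$. This uses only that $\mathcal{D}_{\sigma}$ is closed under isomorphic objects, which is clear since $\Hom_{\B}(\sigma,-)$ is a functor. Now take $\{B_{j}\}\subseteq\mathcal{D}_{\sigma}$. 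Then each $i(B_{j})\in\mathcal{D}_{l(\sigma)}$, so $\coprod i(B_{j})\cong i(\coprod B_{j})$ lies in $\mathcal{D}_{l(\sigma)}$, and therefore $\coprod B_{j}\in\mathcal{D}_{\sigma}$.

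There is no real obstacle here. The only points needing care are that the identifications are compatible with the morphisms $\sigma$ and $l(\sigma)$, which naturality of the adjunction bijection guarantees, and that membership in $\mathcal{D}$ is invariant under isomorphism. One could alternatively use the adjoint pair $(q,i)$ with $ql\simeq\Id_{\B}$, but the route through $ei\simeq\Id_{\B}$ is shorter.
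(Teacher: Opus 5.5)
Your proposal is correct and is essentially the paper's proof: part (1) comes from Lemma~\ref{lem:2.1}(1) applied to the adjoint pair $(l,e)$, and both halves of (2) pass through (1). The first half uses $ei\simeq\Id_{\B}$ together with $i$ preserving coproducts, and the second uses $e$ preserving coproducts; your explicit remark that membership in $\mathcal{D}_{\sigma}$ is invariant under isomorphism is a point the paper uses without stating it.
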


\begin{proof}
(1) Since $(l, e)$ is an adjoint pair, it follows that
$\Hom_{\A}(l(\sigma)$, $A)$ is epic
if and only if $\Hom_{\B}(\sigma$, $e(A))$ is epic by Lemma \ref{lem:2.1}(1).
So $A\in\mathcal{D}_{l(\sigma)}$ if and only if $e(A)\in\mathcal{D}_{\sigma}$.

(2) ``$\Rightarrow$'' Let $(B_{j})_{j\in J}$ be a family of objects in $\B$ with $B_{j}\in\mathcal{D}_{\sigma}$. Since $ei(B_{j})\cong B_{j}$,
then $i(B_{j})\in\mathcal{D}_{l(\sigma)}$ by (1). 
Since $i$ preserves coproduct, then 
$i(\coprod_{j\in J}B_{j})$ $\cong$ $\coprod_{j\in J}i(B_{j})\in\mathcal{D}_{l(\sigma)}$ by assumption. 
Thus $\coprod_{j\in J}B_{i}\in\mathcal{D}_{\sigma}$ by (1).

``$\Leftarrow$'' Let $(A_{j})_{j\in J}$ be a family of objects in $\A$ with $A_{j}\in\mathcal{D}_{l(\sigma)}$. Then $e(A_{j})\in\mathcal{D}_{\sigma}$ by (1). 
Thus $e(\coprod_{j\in J}A_{j})\cong\coprod_{j\in J}e(A_{j})\in\mathcal{D}_{\sigma}$. Then $\coprod_{j\in J}A_{j}\in\mathcal{D}_{l(\sigma)}$ by (1) again.
\end{proof}

Let $P_{1}\xrightarrow{\sigma}P_{0}\to B\to 0 (\ast)$ be a projective presentation of $B$ in $\B$. 
Note that $l$ is right exact, one has a projective presentation 
$l(P_{1})\xrightarrow{l(\sigma)} l(P_{0})\to l(B)\to 0$ by \cite[Lemma 2.2(i)]{K26}. We can now formulate the following results.

\begin{theorem} \label{thm:3.3}
Let $(\mathcal{B},\mathcal{A},i,e,l)$ be a cleft extension with $i$ and $e$ preserving coproduct, and
 $P_{1}\xrightarrow{\sigma}P_{0}\to B\to 0$ be a projective presentation of $B$ in $\B$.
\begin{enumerate}
\item $l(B)$ is a partial silting object in $\A$ with respect to $l(\sigma)$ if and only if $B$ is a partial silting object in $\B$ with respect to $\sigma$ and $F(B)\in\mathcal{D}_{\sigma}$.
\item $l(B)$ is a silting object in $\A$ with respect to $l(\sigma)$ if and only if 
$B$ is a silting object in $\B$ with respect to $\sigma$ and $F(B)\in\Gen(B)$.
\end{enumerate}
\end{theorem}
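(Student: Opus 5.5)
The plan is to translate every condition on $l(B)$ into a condition on $e(l(B))$ through Lemma \ref{lem:3.2}(1), and then to use the split exact sequence $(2.2)$, which gives $el(B)\cong B\oplus F(B)$.

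\textbf{Part (1).} By Lemma \ref{lem:3.2}(1), $l(B)\in\mathcal{D}_{l(\sigma)}$ if and only if $el(B)\in\mathcal{D}_{\sigma}$. Since $\mathcal{D}_{\sigma}$ is defined by surjectivity of a map of Hom groups, it is closed under finite direct sums and direct summands. Hence $el(B)\in\mathcal{D}_{\sigma}$ exactly when $B\in\mathcal{D}_{\sigma}$ and $F(B)\in\mathcal{D}_{\sigma}$.

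Next, $\mathcal{D}_{\sigma}$ and $\mathcal{D}_{l(\sigma)}$ are built from maps between projectives (Lemma \ref{lem:3.1} gives that $l(P_i)$ is projective). So by the remark in Section 2, both are closed under epimorphic images, extensions and products. Being a torsion class therefore reduces to closure under coproducts. Lemma \ref{lem:3.2}(2) shows the two coproduct-closure conditions are equivalent, using that $i$ and $e$ preserve coproducts. Combining these two facts gives (1).

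\textbf{Part (2), forward direction.} Assume $\Gen(l(B))=\mathcal{D}_{l(\sigma)}$.
\begin{enumerate}
\item Since $l(B)\in\mathcal{D}_{l(\sigma)}$, the argument above gives $B,F(B)\in\mathcal{D}_{\sigma}$.
\item Take $X\in\mathcal{D}_{\sigma}$. Then $ei(X)\cong X$, so $i(X)\in\mathcal{D}_{l(\sigma)}=\Gen(l(B))$, giving an epimorphism $l(B)^{(J)}\to i(X)$.
\item Apply $q$, which is right exact, preserves coproducts and satisfies $ql\simeq\Id_{\mathcal{B}}$. Since $i$ is fully faithful, $qi\simeq\Id_{\mathcal{B}}$. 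This yields an epimorphism $B^{(J)}\to X$, so $\mathcal{D}_{\sigma}\subseteq\Gen(B)$.
\item The reverse inclusion holds because $B\in\mathcal{D}_{\sigma}$ and $\mathcal{D}_{\sigma}$ is closed under coproducts (by part (1)) and quotients. Hence $B$ is silting.
\item Applying $e$ (exact, coproduct-preserving) to $l(B)^{(J)}\to A$ shows $e(\Gen(l(B)))\subseteq\Gen(el(B))$. Also, $F(B)\in\mathcal{D}_{\sigma}=\Gen(B)$ directly.
\end{enumerate}

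\textbf{Part (2), converse.} Assume $\Gen(B)=\mathcal{D}_{\sigma}$ and $F(B)\in\Gen(B)$.
\begin{enumerate}
\item Then $el(B)=B\oplus F(B)\in\mathcal{D}_{\sigma}$, so $l(B)\in\mathcal{D}_{l(\sigma)}$. By part (1), $\mathcal{D}_{l(\sigma)}$ is a torsion class, hence $\Gen(l(B))\subseteq\mathcal{D}_{l(\sigma)}$.
\item For the reverse, take $A\in\mathcal{D}_{l(\sigma)}$. Then $e(A)\in\mathcal{D}_{\sigma}=\Gen(B)$, so there is an epimorphism $B^{(J)}\to e(A)$.
\item Apply $l$, which is right exact and coproduct-preserving, to get an epimorphism $l(B)^{(J)}\to le(A)$.
\item Compose with the counit $\mu_A$, which is an epimorphism by Lemma \ref{lem:2.1}(2). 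This gives $A\in\Gen(l(B))$.
\end{enumerate}

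The main point needing care is the identification $qi\simeq\Id$ used in the forward direction. It holds because $q\dashv i$ with $i$ fully faithful, so the counit $qi\to\Id_{\mathcal{B}}$ is an isomorphism. Everything else is formal.
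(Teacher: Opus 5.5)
Your proposal is correct and follows the paper's proof essentially step for step. Part (1) uses Lemma~\ref{lem:3.2} together with the splitting $el(B)\cong B\oplus F(B)$; the forward direction of part (2) applies $q$, using $ql\simeq\Id_{\mathcal{B}}$ and the counit $qi\to\Id_{\mathcal{B}}$, and the converse applies $l$ and composes with the epic counit $\mu_A$. The differences are cosmetic: you use closure of $\mathcal{D}_\sigma$ under finite sums and summands rather than under quotients and extensions, you use that $qi\to\Id_{\mathcal{B}}$ is an isomorphism (since $i$ is fully faithful) where the paper only needs it to be epic via Lemma~\ref{lem:2.1}(2), and your remark about $e(\Gen(l(B)))\subseteq\Gen(el(B))$ is superfluous.
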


\begin{proof}
(1)  If $l(B)\in\mathcal{D}_{l(\sigma)}$, 
one has that $el(B)\in\mathcal{D}_{\sigma}$ by Lemma \ref{lem:3.2}(1).
Note that $el\simeq F\oplus \Id_{\B}$,
it follows that $B$ and $F(B)$ are in $\mathcal{D}_{\sigma}$ as $\mathcal{D}_{\sigma}$ is closed under epimorphic images.

If $B$ and $F(B)$ are in $\mathcal{D}_{\sigma}$, then $el(B)\in\mathcal{D}_{\sigma}$ since $\mathcal{D}_{\sigma}$ is closed under extensions.
Lemma \ref{lem:3.2}(1) infers that $l(B)\in\mathcal{D}_{l(\sigma)}$.

By assumption $e$ and $i$ preserve coproduct, the assertion follows form Lemma \ref{lem:3.2}(2).

(2) ``$\Rightarrow$'' By (1), $B$ is a partial silting object in $\B$ with respect to $\sigma$ and $F(B)\in\mathcal{D}_{\sigma}$. So $\Gen(B)\subseteq\mathcal{D}_{\sigma}$ from the fact that $\mathcal{D}_{\sigma}$ is closed under coproduct and epimorphic images.

Let $B'\in\mathcal{D}_{\sigma}$. Then $ei(B')\cong B'\in\mathcal{D}_{\sigma}$. Lemma \ref{lem:3.2}(1) infers $i(B')\in\mathcal{D}_{l(\sigma)}=\Gen(l(B))$ by assumption.
Thus there are a cardinal $I$ and an epimorphism $f:l(B^{(I)})\to i(B')$ as $l$ preserves coproduct, and so is $q(f): B^{(I)}\cong ql(B^{(I)})\to qi(B')$ for the sake of $q$ being right exact.

Denote by $\xi\colon\mathsf{Id}_{\mathcal{A}}\rightarrow iq$ the unit and by $\eta\colon qi\rightarrow \mathsf{Id}_{\mathcal{B}}$ the counit of the adjoint pair $(q,i)$. 
Note that $i$ is faithful exact, we infer that $qi(B)\overset{\eta_{B}}\to B\to 0$ is exact for all $B\in\B$ by Lemma \ref{lem:2.1}(2).
So we get an epimorphism $$\eta_{B'}q(f):B^{(I)}\to B'.$$ 
Hence $B'\in\Gen(B)$, and so $\mathcal{D}_{\sigma} = \Gen(B)$. 
Thus $B$ is a silting object with respect to $\sigma$ and $F(B)\in\Gen(B)$.

``$\Leftarrow$'' Because $B$ is a silting obiect with respect to $\sigma$, it follows that $\Gen(B)=\mathcal{D}_{\sigma}$,
which yields  $F(B)\in\mathcal{D}_{\sigma}$. 
From (1) we know that $l(B)$ is partial silting with respect to $l(\sigma)$.
Thus $\Gen(l(B))\subseteq\mathcal{D}_{l(\sigma)}$.

Let $A$ be in $\mathcal{D}_{l(\sigma)}$. Then $e(A)\in\mathcal{D}_{\sigma}=$ $\Gen(B)$,
and hence there are a cardinal $J$ and an epimorphism $g: B^{(J)}\to e(A)$.
Thus $l(g): l(B^{(J)})\cong (l(B))^{(J)}\to le(A)$ is epic as $l$ is right exact.
It follows from (2.1) that $\mu_{A}\circ l(g): (l(B))^{(J)}\to A$ is surjective,
which implies $A\in\Gen(l(B))$. Thus $\Gen(\l(B))=\mathcal{D}_{l(\sigma)}$, and we complete the proof.
\end{proof}

We call an object $X\in\B$ \emph{partial $n$-tilting} if (T1) $\pd_{\A}X\leq n$, and (T2) $\Ext_{\B}^j(X$, $X)=0$ for any $j\geq 1$, that is $X\in X^{\perp}$.
Moreover, $X$ is called \emph{$n$-tilting} if in addition (T3) for any $P\in\Proj(\B)$, there exists an exact sequence 
$$0\to P\to X_{0}\to\cdots\to X_{n}\to 0 $$ with each $X_{j}\in\mathrm{Add}X$.

By \cite[Proposition 3.13(1)]{AMV16}, $X$ is 1-tilting if and only if $X$
is silting with respect to a monomorphism $\sigma$ , where  $P_{1}\xrightarrow{\sigma}P_{0}\to X\to 0$ is a projective presentation of $X$. 
Next we will discuss whether $l$ lifts general $n$-tilting objects.

\begin{lemma}\label{lem:3.4}  \textnormal{(\!\!\!\cite[Lemma 2.2]{K26})} 
Let $(\mathcal{B},\mathcal{A},i,e,l)$ be a cleft extension and $X\in\B$. 
The following hold: 
    \begin{itemize}
        \item[(1)] $\mathbb{L}_{j}F(X)\cong \mathsf{e}\mathbb{L}_{j}l(X)$ for all $j\geq 1$, where $\mathbb{L}_{j}$ is the $j$th left derived functor. 
        \item[(2)] If $\mathbb{L}_{j}F(X)=0$ for all $j\geq 1$, then $\Ext_{\mathcal{A}}^j(l(X),Y)\cong \mathsf{Ext}_{\mathcal{B}}^j(X,e(Y))$ for all $j\geq 1$ and every object $Y$ of $\mathcal{A}$.
    \end{itemize} 
\end{lemma}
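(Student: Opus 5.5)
The plan is to compute both sides through a single projective resolution of $X$ in $\B$. We use two facts about the cleft extension. First, $e$ is exact and faithful. Second, $l$ sends projective objects to projective objects, because it is left adjoint to the exact functor $e$: $\Hom_{\A}(l(P),-)\cong\Hom_{\B}(P,e(-))$ is exact whenever $P\in\Proj(\B)$.

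For (1), fix a projective resolution $P_\bullet\to X\to 0$ in $\B$. Then $\mathbb{L}_jl(X)=H_j(l(P_\bullet))$. Since $e$ is exact it commutes with homology, so
$$\mathsf{e}\mathbb{L}_jl(X)\cong H_j(el(P_\bullet)).$$
Next we use the split exact sequence (2.2), $0\to F\to el\to \Id_{\B}\to 0$. The point to check is that the splitting is natural, not merely objectwise. It is: the unit $\nu\colon \Id_{\B}\to el$ is a natural transformation, and composing it with the natural map $el\to\Id_{\B}$ (which is $e\mu_{l(-)}$) gives the identity by the triangle identity quoted in the proof of Lemma \ref{lem:2.1}(2). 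Hence $el\cong F\oplus\Id_{\B}$ as additive functors, and in particular $F$ is additive, so its left derived functors are computed by $\mathbb{L}_jF(X)=H_j(F(P_\bullet))$. Applying the natural decomposition to the complex $P_\bullet$ gives
$$H_j(el(P_\bullet))\cong H_j(F(P_\bullet))\oplus H_j(P_\bullet)=\mathbb{L}_jF(X)\oplus 0\quad\text{for } j\geq 1.$$
This is exactly (1).

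For (2), assume $\mathbb{L}_jF(X)=0$ for all $j\geq1$. By (1), $e\mathbb{L}_jl(X)=0$. Since $e$ is faithful, it reflects zero objects (the identity of $Z$ goes to $0$ when $e(Z)=0$), so $\mathbb{L}_jl(X)=0$ for all $j\geq 1$. Together with the right exactness of $l$, this shows that $l(P_\bullet)\to l(X)\to 0$ is exact. Each $l(P_n)$ is projective, so this is a projective resolution of $l(X)$ in $\A$.

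Therefore, for every $Y\in\A$,
$$\Ext^j_{\A}(l(X),Y)=H^j(\Hom_{\A}(l(P_\bullet),Y)).$$
The adjunction isomorphism $\Hom_{\A}(l(-),Y)\cong\Hom_{\B}(-,e(Y))$ is natural in the first variable, so it gives an isomorphism of cochain complexes $\Hom_{\A}(l(P_\bullet),Y)\cong\Hom_{\B}(P_\bullet,e(Y))$. Taking cohomology yields $\Ext^j_{\A}(l(X),Y)\cong\Ext^j_{\B}(X,e(Y))$ for all $j\geq1$.

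The only step that needs care is the naturality of the splitting of (2.2), which is what allows us to split the complex $el(P_\bullet)$ as a direct sum of complexes; it is settled by the unit/counit triangle identity as above. Everything else is routine homological bookkeeping.
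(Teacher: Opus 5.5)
Your proof is correct. The paper gives no argument of its own here; the lemma is imported from \cite{K26}. What you wrote is the standard proof of statements of this kind: resolve $X$ by projectives, apply $l$ and then the exact functor $e$, split off the $\Id_{\B}$-summand using (2.2), and, once $\mathbb{L}_j l(X)=0$ is known, use that $l(P_\bullet)$ is a projective resolution of $l(X)$ so that the adjunction transports $\Ext$.

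There is one slip to fix. The projection $el\to\Id_{\B}$ is not $e\mu_{l(-)}$, which is a map $elel(B)\to el(B)$ and so has the wrong source and target. The correct projection is $e(\mu_{i(B)})\colon elei(B)\to ei(B)$, transported along the natural isomorphism $ei\cong\Id_{\B}$. To see that $\nu_B$ is a section of it, use the triangle identity $e(\mu_A)\nu_{e(A)}=\Id_{e(A)}$ with $A=i(B)$, together with naturality of $\nu$ applied to the isomorphism $B\cong ei(B)$. With that correction your natural splitting $el\cong F\oplus\Id_{\B}$ goes through.

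You do not actually need the splitting to be natural. Since (2.2) is a sequence of functors, $0\to F(P_\bullet)\to el(P_\bullet)\to P_\bullet\to 0$ is a degreewise exact sequence of complexes. Because $H_j(P_\bullet)=0$ for $j\geq 1$, its long exact homology sequence already gives $H_j(F(P_\bullet))\cong H_j(el(P_\bullet))$ for $j\geq 1$.
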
 

\begin{theorem} \label{thm:3.5}
Let $(\mathcal{B},\mathcal{A},i,e,l)$ be a cleft extension and $X\in\B$ with $\mathbb{L}_{t}F(X)=0$ for all $1\leq t\leq n+1$. 
The following hold:
 \begin{itemize}
 \item[(1)] $l(X)$ is partial $n$-tilting iff $X$ is partial $n$-tilting and $F(X)\in X^{\bot_{n}}$.
 \item[(2)] If $\Ker q = 0$, then $l(X)$ is $n$-tilting iff $X$ is $n$-tilting and $F(X)\in X^{\bot_{n}}$.
  \end{itemize}
\end{theorem}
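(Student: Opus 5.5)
Throughout I will use the hypothesis $\mathbb{L}_tF(X)=0$ for $1\le t\le n+1$. By Lemma \ref{lem:3.4}(1) it gives $e\mathbb{L}_tl(X)=0$, hence $\mathbb{L}_tl(X)=0$ for $1\le t\le n+1$, since $e$ is faithful exact.

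\emph{Projective dimension.} Take a projective resolution $\cdots\to P_1\to P_0\to X\to 0$ in $\B$ and apply $l$. By Lemma \ref{lem:3.1}, $l(P_j)$ is projective. The vanishing of $\mathbb{L}_tl(X)$ makes $l(P_{n})\to\cdots\to l(P_0)\to l(X)\to 0$ exact, with $n$-th syzygy $l(\Omega^nX)$ up to the relevant kernel. I expect the cleanest route is Lemma \ref{lem:3.4}(2). I would extend it by dimension shifting, noting that vanishing of $\mathbb{L}_jF$ only up to $n+1$ suffices for degrees $j\le n+1$. This gives $\Ext^j_\A(l(X),Y)\cong\Ext^j_\B(X,e(Y))$ for $1\le j\le n+1$ and all $Y$. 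If $\pd X\le n$, then $\Ext^{n+1}_\A(l(X),-)=0$, so $\pd l(X)\le n$. Conversely, if $\pd l(X)\le n$, take $Y=i(B')$: then $\Ext^{n+1}_\B(X,B')\cong\Ext^{n+1}_\A(l(X),i(B'))=0$ for all $B'$, so $\pd X\le n$. Thus (T1) transfers in both directions.

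\emph{Self-orthogonality.} For $1\le j\le n$ (which suffices once projective dimensions are $\le n$), the isomorphism above gives
\[\Ext^j_\A(l(X),l(X))\cong\Ext^j_\B(X,el(X))\cong\Ext^j_\B(X,X)\oplus\Ext^j_\B(X,F(X)),\]
using the split sequence (2.2). Hence $l(X)\in l(X)^\perp$ if and only if $X\in X^{\perp}$ and $F(X)\in X^{\perp_n}$. Combined with the previous step, this proves (1).

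\emph{Part (2), (T3) from $\B$ to $\A$.} Every projective of $\A$ is a summand of some $l(P)$ by Lemma \ref{lem:3.1}, and $\mathrm{Add}\,l(X)$ is closed under summands. So it suffices to treat $l(P)$, taking care with the summand argument (e.g.\ via an Eilenberg swindle or by working with $\Proj$ closed under summands). Take the sequence $0\to P\to X_0\to\cdots\to X_n\to0$ in $\B$ with $X_j\in\mathrm{Add}X$. Split it into short exact sequences $0\to K_j\to X_j\to K_{j+1}\to0$ and apply $l$. Each $X_j$ is a summand of a coproduct of copies of $X$, and $\mathbb{L}_t$ commutes with coproducts, so $\mathbb{L}_tl(X_j)=0$ for $t\le n+1$. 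Dimension shifting along the sequence then gives $\mathbb{L}_1l(K_{j})=0$ for each $j$. Hence $l$ keeps the sequence exact, yielding $0\to l(P)\to l(X_0)\to\cdots\to l(X_n)\to0$ with $l(X_j)\in\mathrm{Add}\,l(X)$.

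\emph{Part (2), (T3) from $\A$ to $\B$.} This is the main obstacle, and it is where $\Ker q=0$ should enter. Apply $q$ to the sequence $0\to l(P)\to Y_0\to\cdots\to Y_n\to0$ with $Y_j\in\mathrm{Add}\,l(X)$. We have $ql\cong\Id_\B$, so $q(Y_j)\in\mathrm{Add}X$ and $q(l(P))\cong P$. The difficulty is that $q$ is only right exact. My plan is to show $\mathbb{L}_tq$ vanishes on the relevant objects. I would first try computing $\mathbb{L}_tq(l(Z))$ through projective resolutions of the form $l(P_\bullet)$: one gets $\mathbb{L}_tq(l(Z))\cong\mathbb{L}_t(ql)(Z)=0$ whenever $\mathbb{L}_sl(Z)=0$ in that range. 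Then I would use $\Ker q=0$ to show the cokernels/syzygies $C_j$ behave well. The idea is that the homology of $q$ applied to the sequence is computed from $\mathbb{L}_1q(C_j)$. I would deduce its vanishing by dimension shifting from the $Y_j$ (on which $\mathbb{L}q$ vanishes) and the last term, with $\Ker q=0$ ruling out nonzero objects killed by $q$ arising as homology. If this direct route stalls, the fallback is the characterization of (T3) as $\big(\bigcap_{j}\Ker\Ext^j(X,-)\big)\cap\Ker\Hom(X,-)=0$ in the presence of (T1)–(T2). That characterization can be transferred via the Ext isomorphism, with $e(i(B'))\cong B'$, and uses $\Ker q=0$ to detect nonzero objects.
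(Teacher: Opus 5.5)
Your proof of (1) and of the implication ``$l(X)$ $n$-tilting $\Rightarrow$ $X$ $n$-tilting'' in (2) is sound and matches the paper. You also note that Lemma~\ref{lem:3.4}(2) needs only $\mathbb{L}_jF(X)=0$ for $j\le n+1$ to compare $\Ext^j$ in degrees $\le n+1$; the paper omits this. However, you have put the hypothesis $\Ker q=0$ in the wrong place. In the direction from $\A$ to $\B$ it plays no role. Once $\mathbb{L}_tq$ vanishes on $\mathrm{Add}\,l(X)$ for $1\le t\le n$ (which you correctly get from resolutions $l(P_\bullet)$ and $ql\cong\Id_{\B}$), descending dimension shifting from $Y_n$ along the short exact pieces gives $\mathbb{L}_1q=0$ on every cosyzygy. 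So $q$ preserves exactness, and there is no homology for $\Ker q=0$ to rule out.

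The genuine gap is in the direction from $\B$ to $\A$. Condition (T3) is required for \emph{every} projective $A$ of $\A$, and Lemma~\ref{lem:3.1} only says that $A$ is a direct summand of some $l(P)$. You argue that it ``suffices to treat $l(P)$'' because $\mathrm{Add}\,l(X)$ is closed under summands, but this does not follow. That closure says nothing about whether the class of objects admitting a length-$n$ $\mathrm{Add}\,l(X)$-coresolution is closed under summands. An Eilenberg swindle only gives $A\oplus l(P^{(\mathbb{N})})\cong l(P^{(\mathbb{N})})$. Extracting a coresolution of $A$ from this needs a further argument that you do not give, for instance splitting off terms via $\Ext$-vanishing against coproducts of copies of $l(X)$, which (T2) as defined here does not obviously supply.

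This is exactly where the paper uses $\Ker q=0$. It shows every projective $A$ satisfies $A\cong lq(A)$ with $q(A)$ projective in $\B$. It then applies $l$ to a coresolution $0\to q(A)\to X_0\to\cdots\to X_n\to 0$, using $\mathbb{L}_jl(X)=0$. A clean way to see $A\cong lq(A)$ is as follows. Lift the unit $A\to iq(A)$ through the natural epimorphism $l(B)\to i(B)$ (adjoint to $B\cong ei(B)$, with $B=q(A)$) to get $g\colon A\to lq(A)$. Since $q$ sends both maps to isomorphisms, $q(g)$ is an isomorphism. Then $q(\Coker g)=0$ forces $g$ to be epic, hence split because $lq(A)$ is projective. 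Finally $q(\Ker g)=0$ forces $g$ to be an isomorphism. Without this step, your argument establishes (T3) only for projectives of the form $l(P)$.
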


\begin{proof}
(1) ``$\Rightarrow$'' Suppose that $l(X)$ is partial $n$-tilting. Then $\pd_{\A}l(X)\leq n$,
and for any $B\in\B$, it follows from Lemma \ref{lem:3.4}(2) that
$$\Ext_{\mathcal{B}}^{n+1}(X, B)\cong\Ext_{\mathcal{B}}^{n+1}(X, ei(B))\cong\Ext_{\mathcal{A}}^{n+1}(\mathsf{l}(X), i(B))=0,$$ which yields $\pd_{\B}X\leq n$. 
From Lemma \ref{lem:3.4}(2) and (2.2) we know that 
for any $1\leq j\leq n$, $0=\Ext_{\A}^j(l(X),l(X))\cong \Ext_{\B}^j(X,el(X))\cong\Ext_{\B}^j(X,X\oplus F(X)).$
Thus $F(X)\in X^{\bot_{n}}$ and $X\in X^{\bot}$, as desired.

``$\Leftarrow$'' Let $X$ be partial $n$-tilting, then $\pd_{\A}X\leq n$.
For any $Y\in\A$, it follows from Lemma \ref{lem:3.4}(2) that
$\Ext_{\mathcal{A}}^{n+1}(l(X), Y)\cong\Ext_{\mathcal{B}}^{n+1}(X, e(Y))=0,$
which yields $\pd_{\A}l(X)\leq n$. From Lemma \ref{lem:3.4}(2) and (2.2) we know that 
for any $1\leq j\leq n$, $\Ext_{\A}^j(l(X),l(X))\cong \Ext_{\B}^j(X,el(X))\cong\Ext_{\B}^j(X,X\oplus F(X))=0$ by assumption.

(2) ``$\Rightarrow$'' By (1) it suffices to show that $X$ satisfies (T3).
Let $P\in\Proj(\B)$, then  $l(P)\in\Proj(\A)$ and since $l(X)$ is $n$-tilting,
there exists an exact sequence $$0\to l(P)\to T_{0}\to\cdots\to T_{n}\to 0 \qquad(\ast)$$ 
with each $T_{t}\in\mathrm{Add}l(X)$.
Note that $ql\simeq\Id_{\B}$, thus $\mathbb{L}_{j}ql=0$, and hence $\mathbb{L}_jq(T_{j})=0$ for any $j\geq 1$.
Applying the functor $q$ to $(\ast)$ gives an exact sequence
$0\to P\to q(T_{0})\to\cdots\to q(T_{n})\to 0$ with each $q(T_{j})\in\mathrm{Add}X$.
Therefore, $X$ is $n$-tilting.

``$\Leftarrow$''
Let $A\in\Proj(\A)$.Lemma 3.1 infers that $A$ is a direct summand of $l(P)$ for some $P\in\Proj(\B)$, and so $q(A)$ is a direct summand of $P$.
Because $X$ is $n$-tilting, there exists an exact sequence 
$$0\to q(A)\to X_{0}\to\cdots\to X_{n}\to 0 \qquad(\ast\ast)$$
with each $X_{t}\in\mathrm{Add}X$.
For each $1\leq j\leq n$, since $e$ is faithful and $\mathbb{L}_j\mathsf{F}(X)=0$, it follows from  Lemma \ref{lem:3.4}(1) that
$\mathbb{L}_{j}l(X) =0$.
Thus by applying the functor $l$ to $(\ast\ast)$ one gets an exact sequence
$$0\to lq(A)\to l(X_{0})\to\cdots\to l(X_{n})\to 0$$ 
with each $l(X_{t})\in\mathrm{Add}l(X)$.
Notice taht $q(A-lq(A))=q(A)-qlq(A)=0$, then $A-lq(A)\in \Ker q$, and hence $A=lq(A)$ by hypothesis. 
Therefore, $l(X)$ satisfies (T3), as required.
\end{proof}


\subsection{Transfers from silting objects of $\A$ to $\B$}

\begin{lemma} \label{lem:3.6}
Let $(\mathcal{B},\mathcal{A}, i,e,l)$ be a cleft extension.
 Suppose that $A_{1}\overset{\delta}{\rightarrow}A_{2}$ is a morphism in $\A$. Then $i(B)\in\mathcal{D}_{\delta}$ if and only if $B\in\mathcal{D}_{\mathbf{q}(\delta)}$.
\end{lemma}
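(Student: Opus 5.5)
The plan is to use the adjunction $(q,i)$ exactly as Lemma \ref{lem:3.2}(1) used the adjunction $(l,e)$. Lemma \ref{lem:2.1}(1) is stated for an adjoint pair $(\mathbb{F},\mathbb{G})$ with $\mathbb{G}\colon\A\to\B$. Here we need the mirror situation: $q\colon\A\to\B$ is left adjoint to $i\colon\B\to\A$. The argument of Lemma \ref{lem:2.1}(1) is purely formal, so it applies verbatim with the roles of $\A$ and $\B$ interchanged.

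Fix $B\in\B$. The adjunction isomorphism $\Hom_{\A}(-,i(B))\cong\Hom_{\B}(q(-),B)$ is natural in the first variable. Naturality with respect to $\delta\colon A_{1}\to A_{2}$ gives a commutative square
\[
\begin{array}{ccc}
\Hom_{\A}(A_{2}, i(B)) & \xrightarrow{\Hom_{\A}(\delta, i(B))} & \Hom_{\A}(A_{1}, i(B)) \\
\cong\Bigg\downarrow & & \Bigg\downarrow\cong \\
\Hom_{\B}(q(A_{2}), B) & \xrightarrow{\Hom_{\B}(q(\delta), B)} & \Hom_{\B}(q(A_{1}), B)
\end{array}
\]
whose vertical maps are bijections.

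From the square, the top row is surjective if and only if the bottom row is. By the definition of $\mathcal{D}$, the top row being surjective means $i(B)\in\mathcal{D}_{\delta}$, and the bottom row being surjective means $B\in\mathcal{D}_{q(\delta)}$. This proves the equivalence.

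No step is a real obstacle. The only point to check is that the orientation of the adjunction is right: $q$ must be the left adjoint of $i$, as recalled after the definition of a cleft extension. This orientation is what puts $\delta$ in the first (contravariant) argument on both sides of the square. No exactness or coproduct hypotheses are needed.
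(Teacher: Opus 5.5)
Your argument is correct and is essentially the paper's proof. The paper invokes Lemma~\ref{lem:2.1}(1) for the adjoint pair $(q,i)$, which amounts to the naturality square you write out, with the roles of $\A$ and $\B$ interchanged, as you correctly note.
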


\begin{proof}
Since $(q,i)$ is an adjoint pair,  Lemma \ref{lem:2.1}(1) infers that $\Hom_{\B}(q(\delta), B)$ is epic if and only if $\Hom_{\A}(\delta, i(B))$ is epic.
So $i(B)\in\mathcal{D}_{\delta}$ if and only if $B\in\mathcal{D}_{q(\delta)}$.
\end{proof}

\begin{lemma} \label{lem:3.7}
For every $A\in\A$, it has a projective resolution of the form $\cdots\to l(P_{1})\to l(P_{0})\to A\to 0$ with each $P_{j}\in\Proj(\B)$.
\end{lemma}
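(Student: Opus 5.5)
We build the resolution inductively, using only two facts: every object of $\A$ is a quotient of an object $l(P)$ with $P\in\Proj(\B)$, and each such $l(P)$ is projective in $\A$.

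\emph{Step 1: a cover of an arbitrary object.} Let $A\in\A$. Since $\B$ has enough projectives, choose an epimorphism $\pi\colon P\to e(A)$ with $P\in\Proj(\B)$. The functor $l$ is right exact (a left adjoint), so $l(\pi)\colon l(P)\to le(A)$ is an epimorphism. By Lemma \ref{lem:2.1}(2), applied with $e$ faithful exact, the counit $\mu_{A}\colon le(A)\to A$ is an epimorphism; this is the exact sequence (2.1). Hence $\mu_{A}\circ l(\pi)\colon l(P)\to A$ is an epimorphism.

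\emph{Step 2: $l(P)$ is projective.} We have $\Hom_{\A}(l(P),-)\cong\Hom_{\B}(P,e(-))$. This is a composite of the exact functor $e$ with the exact functor $\Hom_{\B}(P,-)$, so it is exact. Hence $l(P)$ is projective in $\A$; this is also the ``if'' direction of Lemma \ref{lem:3.1}.

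\emph{Step 3: induction.} Put $P_{0}=P$ and $K_{0}=\Ker(l(P_{0})\to A)$, which exists because $\A$ is abelian. Apply Steps 1 and 2 to $K_{0}$ to get $P_{1}\in\Proj(\B)$ and an epimorphism $l(P_{1})\to K_{0}$. Compose it with the inclusion $K_{0}\hookrightarrow l(P_{0})$ and set $K_{1}$ to be the kernel of $l(P_{1})\to K_{0}$. Iterating splices these short exact sequences into an exact complex
\[
\cdots\to l(P_{1})\to l(P_{0})\to A\to 0
\]
with every term projective, which is the required projective resolution.

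There is no real obstacle; the only point needing care is Step 1, namely that $l(P)\to A$ is surjective. This depends on the epimorphism property of the counit $\mu$, which uses the faithful exactness of $e$ through Lemma \ref{lem:2.1}(2).
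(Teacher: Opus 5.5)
Your proof is correct, but it takes a different route from the paper.

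\textbf{The paper's route.} It starts from an arbitrary projective resolution $\cdots\to Q_{1}\to Q_{0}\to A\to 0$. It then uses the ``only if'' half of Lemma \ref{lem:3.1}, the characterization quoted from the literature, to complete $Q_{0}$ to $Q_{0}\oplus Q_{0}'\cong l(P_{0})$. The extra summand is absorbed into the next term as $Q_{1}\oplus Q_{0}'$, via an identity component, and this is repeated degree by degree.

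\textbf{Your route.} You build the resolution from scratch.
\begin{itemize}
\item The composite $\mu_{A}\circ l(\pi)\colon l(P)\to A$ is an epimorphism, since $l$ is right exact and $\mu_{A}$ is epic by Lemma \ref{lem:2.1}(2).
\item $l(P)$ is projective, since $\Hom_{\A}(l(P),-)\cong\Hom_{\B}(P,e(-))$ is exact.
\item You then iterate on kernels.
\end{itemize}

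\textbf{What each approach buys.} Your argument is self-contained: it uses only the adjunction $(l,e)$ and the exactness of $e$. It never needs the ``only if'' direction of Lemma \ref{lem:3.1}; in fact your Step 1 is essentially the standard proof of that direction (a projective $Q$ is a quotient of some $l(P)$, so the epimorphism splits).

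The paper's argument is shorter, given the cited characterization. It also shows slightly more: any given projective resolution of $A$ can be put into the required form by adding split-exact pieces $Q'\xrightarrow{\mathrm{id}}Q'$. So the new resolution differs from the original only by contractible summands. Since projective resolutions are unique up to homotopy anyway, this extra information is not needed for the statement.
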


\begin{proof}

From \cite[Lemma 2.3]{K26} we know that $Q\in\A$ is projective if and only if it is a direct summand of $l(P)$ for some $P\in\Proj(\B)$. Let $\cdots\to Q_{2}\to Q_{1} \to Q_{0}\to A\to 0$ be a projective resolution of $A\in\A$. Then there is $P_{0}\in\Proj(\B)$ such that $Q_{0}\oplus Q'_{0}=l(P_{0})$, so $\cdots\to Q_{2}\to Q_{1}\oplus Q'_{0} \to l(P_{0})\to A\to 0$ is exact. Continuing the process one obtains the assertion.
\end{proof}

Now we are in a position to state the main result in this subsection.

\begin{theorem} \label{thm:3.8}
Let $(\mathcal{B},\mathcal{A}, i,e,l)$ be a cleft extension with $i$ preserving coproduct,
and $l(P_{1})\xrightarrow{\delta}l(P_{0})\to A\to 0$ be a projective presentation of $A\in\A$.
If $A$ is (resp. partial) silting with respect to $\delta$, then $q(A)$ is (resp. partial) silting with respect to $\mathbf{q}(\delta)$.
\end{theorem}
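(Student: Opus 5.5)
First I fix the presentation in $\B$. Since $q$ is right exact and $ql\simeq\Id_{\B}$, applying $q$ to $l(P_{1})\xrightarrow{\delta}l(P_{0})\to A\to 0$ gives an exact sequence $P_{1}\xrightarrow{q(\delta)}P_{0}\to q(A)\to 0$, up to the isomorphisms $ql(P_j)\cong P_j$. This is a projective presentation of $q(A)$ in $\B$, so $\mathcal{D}_{q(\delta)}$ makes sense. The basic tool is Lemma \ref{lem:3.6}: $B\in\mathcal{D}_{q(\delta)}$ if and only if $i(B)\in\mathcal{D}_{\delta}$.

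\emph{Partial silting.} First I show $q(A)\in\mathcal{D}_{q(\delta)}$. By Lemma \ref{lem:3.6} this amounts to $iq(A)\in\mathcal{D}_{\delta}$. The unit $\xi_A\colon A\to iq(A)$ of $(q,i)$ is an epimorphism: $q$ is right exact, so $q$ of the cokernel of $\xi_A$ is the cokernel of $q(\xi_A)$, and $q(\xi_A)$ is split epi by the triangle identity. Then the cokernel $C$ satisfies $q(C)=0$, hence $\Hom_{\A}(C,i(B))=0$ for all $B$. Taking $B=q(C)$ does not immediately finish this, so I will handle it more cleanly as follows. The map $\Hom(\delta,-)$ surjectivity on $iq(A)$ is equivalent to surjectivity of $\Hom_{\B}(q(\delta),q(A))$, so I can argue directly in $\B$. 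The first map $q(A)$ is a cokernel of $q(\delta)$, and since $A\in\mathcal{D}_\delta$, every map $l(P_1)\to A$ factors through $\delta$. Given $g\colon P_1\to q(A)$, I lift it along $q(\pi)$, where $\pi\colon l(P_0)\to A$. Adjunction gives $\Hom_{\B}(P_1,q(A))\cong\Hom_{\A}(l(P_1),iq(A))$, so I need the epimorphism $A\to iq(A)$ to let maps into $iq(A)$ from the projective $l(P_1)$ lift to $A$. Projectivity of $l(P_1)$ gives this lift, and $\mathcal{D}_\delta$ then factors the lift through $\delta$. So the key fact to verify is that $\xi_A$ is epic; equivalently, $\mathcal{D}_\delta$ is closed under epimorphic images, by the remark in Section 2 since $\delta$ is a map of projectives, given $A\to iq(A)$ epi. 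I expect this epimorphism to be the main point. I would prove it using exactness of $i$ and full faithfulness: $\Hom(\Coker\xi_A, i(B))=0$ for all $B$, while the cokernel is of the form $i(\cdot)$ because $iq(A)\to \Coker$ is a map in the image of $i$ (as $i$ is full and exact, $\Coker\xi_A$ is a quotient of $iq(A)$ in $\A$). I still need to confirm that $i(\B)$ is closed under quotients. If this fails, I will instead use that $\mathcal{D}_{q(\delta)}$ is closed under quotients in $\B$, and that $q(A)$ is a quotient of $q(l(P_0))=P_0$ compatibly.

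\emph{Torsion class.} Closure of $\mathcal{D}_{q(\delta)}$ under coproducts follows from Lemma \ref{lem:3.6}, exactly as in Lemma \ref{lem:3.2}(2), using that $i$ preserves coproducts and $\mathcal{D}_\delta$ is closed under coproducts. Closure under extensions and quotients is automatic since $q(\delta)$ is a map of projectives.

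\emph{Silting.} It remains to show $\mathcal{D}_{q(\delta)}\subseteq\Gen(q(A))$, the reverse inclusion following from the partial case. Let $B\in\mathcal{D}_{q(\delta)}$. Then $i(B)\in\mathcal{D}_\delta=\Gen(A)$, giving an epimorphism $A^{(I)}\to i(B)$. Applying the right exact, coproduct-preserving functor $q$ gives an epimorphism $q(A)^{(I)}\to qi(B)$. Composing with the epimorphic counit $\eta_B\colon qi(B)\to B$, as in the proof of Theorem \ref{thm:3.3}(2), gives $B\in\Gen(q(A))$.
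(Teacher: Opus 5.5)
You have a genuine gap at the step everything else rests on: showing $q(A)\in\mathcal{D}_{q(\delta)}$. Your torsion-class argument and your proof of $\mathcal{D}_{q(\delta)}\subseteq\Gen(q(A))$ are fine and are exactly the paper's. But the inclusion $\Gen(q(A))\subseteq\mathcal{D}_{q(\delta)}$ also needs $q(A)\in\mathcal{D}_{q(\delta)}$, so the gap affects both the partial and the full silting case.

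You correctly reduce this, via Lemma \ref{lem:3.6} and closure of $\mathcal{D}_\delta$ under quotients, to the unit $\xi_A\colon A\to iq(A)$ being an epimorphism. You never prove this.
\begin{itemize}
\item Knowing $q(\Coker\xi_A)=0$ gives nothing, because $\Ker q\neq 0$ in general (cf.\ the hypothesis $\Ker q=0$ in Theorem \ref{thm:3.5}(2)).
\item Your claim that $\Coker\xi_A$ lies in the image of $i$ ``because $iq(A)\to\Coker\xi_A$ is a map in the image of $i$'' is circular. Fullness of $i$ says nothing about maps whose target is not yet known to be of the form $i(-)$, and you leave closure of $i(\B)$ under quotients unverified.
\item The fallback also fails. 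Closure of $\mathcal{D}_{q(\delta)}$ under quotients only helps for quotients of objects already in $\mathcal{D}_{q(\delta)}$. $P_0$ is generally not there, since that would mean every map $P_1\to P_0$ factors through $q(\delta)$.
\end{itemize}

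The missing idea, which the paper uses, bypasses $\xi_A$: every $f\colon P_1\to q(A)$ has the form $q(\psi)$ with $\psi\colon l(P_1)\to A$.
\begin{itemize}
\item The map $q(\mu_A)\colon e(A)\cong qle(A)\to q(A)$ is epic by (2.1) and right exactness of $q$. So by projectivity $f=q(\mu_A)g$ for some $g\colon P_1\to e(A)$.
\item Then $\psi:=\mu_A\circ l(g)$ satisfies $q(\psi)=f$, up to $ql\simeq\Id_{\B}$.
\item Since $A\in\mathcal{D}_\delta$, we get $\psi=\phi\delta$, and applying $q$ yields $f=q(\phi)q(\delta)$.
\end{itemize}
Transport the factorization by applying $q$ as above. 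An adjunction square for $(l,e)$ with bottom row $\Hom_{\B}(q(\delta),e(A))$, as the paper writes it, does not commute in general. The reason is that the adjoint $P_1\to el(P_0)\cong P_0\oplus F(P_0)$ of $\delta$ may have a nonzero $F(P_0)$-component.

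Alternatively, you can complete your own route by proving $\xi_A$ epic through naturality.
\begin{itemize}
\item Naturality gives $\xi_A\mu_A=iq(\mu_A)\,\xi_{le(A)}$, and $iq(\mu_A)$ is epic.
\item $\xi_{l(B)}$ is epic: $e(\xi_{l(B)})\nu_B$ is the unit of the composite adjunction $ql\dashv ei$, whose left adjoint $ql\simeq\Id_{\B}$ is fully faithful, so this unit is an isomorphism.
\item Hence $e(\xi_{l(B)})$ is split epic, and the faithful exact functor $e$ reflects epimorphisms, so $\xi_{l(B)}$ and then $\xi_A$ are epic.
\end{itemize}
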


\begin{proof}
The exact sequence $l(P_{1})\xrightarrow{\delta}l(P_{0})\to A\to 0$ induces the exact sequence $P_{1}\xrightarrow{q(\delta)}P_{0}\to q(A)\to 0$. 

Suppose that $A$ is partial silting with respect to $\delta$. Let $(Y_{j})_{j\in J}$ be a family of objects in $\B$ with $Y_{j}\in\mathcal{D}_{q(\delta)}$. Then $i(Y_{j})\in\mathcal{D}_{\delta}$ by Lemma \ref{lem:3.6} and so $i(\coprod_{j\in J}Y_{j})\cong \coprod_{j\in J}i(Y_{j})\in\mathcal{D}_{\delta}$. Thus $\coprod_{j\in J}Y_{j}\in\mathcal{D}_{q(\delta)}$ by Lemma \ref{lem:3.6} again. So $\mathcal{D}_{q(\delta)}$ is a torsion class.

Let $f\in\Hom_{R}(P_{1}, q(A))$. From (2.1) we have that $le(A)\xrightarrow{\mu_{A}} A\to 0$ is exact,
and hence $e(A)\xrightarrow{q(\mu_{A})} q(A)\to 0$ is exact as $q$ is right exact.
Then there is $g: P_{1}\to e(A)$ such that $f=q(\mu_{A})g$.
Since $(l,e)$ is an adjoint pair, we get the following commutative diagram
\[
\begin{array}{ccc}
\Hom_{\A}(l(P_{0}), A) & \xrightarrow{\operatorname{Hom}_{\A}(\delta, A)} & \operatorname{Hom}_{\A}(l(P_{1}), A) \\
\cong \Bigg\downarrow & & \Bigg\downarrow \cong  \\
\operatorname{Hom}_{\B}(P_{0}, e(A)) & \xrightarrow{\operatorname{Hom}_{\B}(q(\delta), e(A))} & \operatorname{Hom}_{\B}(P_{1}, e(A)) 
\end{array}
\]

 Since $A\in\mathcal{D}_{\delta}$, then $\Hom_{\A}(\delta, A)$ is epic,
and hence there is $h: P_{0}\to e(A)$ such that $g=hq(\delta)$. 
 So $f=q(\mu_{A})hq(\delta)$. Hence $q(A)\in\mathcal{D}_{q(\delta)}$. Thus $q(A)$ is partial silting with respect to $q(\delta)$.

Now suppose that $A$ is silting with respect to $\delta$. Then $q(A)$ is partial silting with respect to $q(\delta)$. So $\Gen(q(A))\subseteq\mathcal{D}_{q(\delta)}$.

Let $Y\in\mathcal{D}_{q(\delta)}$.  Lemma \ref{lem:3.6} yields $i(Y)\in\mathcal{D}_{\delta}=\Gen A$. Thus there are a cardinal $J$ and an epimorphism $f: A^{(J)}\to i(Y)$, which implies that $q(f):q(A)^{(J)}\to qi(Y)$ is an epimorphism. 
Note that $qi(Y)\overset{\eta_{Y}}\to Y\to 0$ is exact for all $Y\in\B$ by Lemma \ref{lem:2.1}(2),
where $\eta\colon qi\rightarrow \mathsf{Id}_{\mathcal{B}}$ is the counit of the adjoint pair $(q,i)$. 
Hence $\eta_{Y}q(f): q(A)^{(J)}\to Y$ is epic and so $Y\in\Gen(q(A))$. 
So $\mathcal{D}_{q(\delta)}\subseteq\Gen(q(A))$. Thus $q(A)$ is  silting with respect to $q(\delta)$.
\end{proof}

The following result is a special case of the above theorem.

\begin{corollary} \label{cor:3.9}
Let $B$ be an object in $\B$.
If $i$ preserves coproduct and $l(B)$ is (resp. partial) silting with respect to $\delta$, then $B$ is (resp. partial) silting with respect to $\mathbf{q}(\delta)$.
\end{corollary}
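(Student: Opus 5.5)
This is the special case $A=l(B)$ of Theorem \ref{thm:3.8}, so the plan is to check that $l(B)$ carries a presentation of the form required there and then apply the theorem.

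First I would fix what $\delta$ is. It should be a projective presentation $l(P_{1})\xrightarrow{\delta}l(P_{0})\to l(B)\to 0$ of $l(B)$ in $\A$ with $P_{0},P_{1}\in\Proj(\B)$. Such presentations exist by Lemma \ref{lem:3.7}. Alternatively, take a projective presentation $P_{1}\xrightarrow{\sigma}P_{0}\to B\to 0$ in $\B$ and apply the right exact functor $l$, as in the remark before Theorem \ref{thm:3.3}; then $\delta=l(\sigma)$.

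Next I would apply Theorem \ref{thm:3.8} with $A=l(B)$. Its only hypothesis beyond the presentation is that $i$ preserves coproducts, and we assume this. The conclusion is that $q(l(B))$ is (resp. partial) silting with respect to $q(\delta)$.

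Finally I would identify $q(l(B))$ with $B$ using the natural isomorphism $ql\simeq \Id_{\B}$, which comes from the adjunction $(ql,ei)$ together with $ei\simeq\Id_{\B}$. It remains to check that the silting notion transfers along the isomorphism $ql(B)\cong B$. The map $q(\delta)\colon P_{1}\to P_{0}$ has cokernel $ql(B)$, because $q$ is right exact. Composing the projection $P_{0}\to ql(B)$ with the isomorphism to $B$ gives a projective presentation $P_{1}\xrightarrow{q(\delta)}P_{0}\to B\to 0$. The class $\mathcal{D}_{q(\delta)}$ depends only on the map $q(\delta)$, so the condition $B\in\mathcal{D}_{q(\delta)}$ holds exactly when $ql(B)\in\mathcal{D}_{q(\delta)}$. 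Likewise $\Gen(B)=\Gen(ql(B))$, since both classes are closed under isomorphism. Hence the (partial) silting property passes from $ql(B)$ to $B$.

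I don't expect a real obstacle here. The one point to handle carefully is the bookkeeping just described: the presentation of $q(A)$ produced in Theorem \ref{thm:3.8} is a presentation of $ql(B)$, and it has to be recognised as a presentation of $B$ itself via $ql\simeq\Id_{\B}$. In the case $\delta=l(\sigma)$ this is immediate, because $q(\delta)=ql(\sigma)$ is then isomorphic to $\sigma$.
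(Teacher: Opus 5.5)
Your proposal is correct and follows the paper's argument: the paper simply records Corollary \ref{cor:3.9} as the special case $A=l(B)$ of Theorem \ref{thm:3.8}, with $q(l(B))\cong B$ coming from $ql\simeq\Id_{\B}$. Your extra bookkeeping is not needed for the argument, only for rigour: you check that $q(\delta)$ still presents $B$, and that membership in $\mathcal{D}_{q(\delta)}$ and $\Gen$ is invariant under this isomorphism, which the paper leaves implicit.
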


\begin{remark} \label{rmk:3.10}
{\rm Let $(\mathcal{B},\mathcal{A},i,e,l)$ be a cleft extension. If $e$ has a right adjoint $r$,
then it is also a cleft coextension, and hece $i$ and $e$ both preserve coproduct. 
A nice situation occurs when a cleft extension is also a cleft coextension. For example, we have the following result:}
\end{remark} 

\begin{corollary} \label{thm:3.11}
Let $(\mathcal{B},\mathcal{A},i,e,l,r)$ be a cleft extension with $e$ having a right adjoint functor $r$. Then
\begin{enumerate}
\item Let $P_{1}\xrightarrow{\sigma}P_{0}\to B\to 0$ be a projective presentation of $B$ in $\B$.
 $l(B)\in\A$ is silting with respect to $l(\sigma)$ if and only if $B$ is so with respect to $\sigma$ and $F(B)\in$ $\Gen(B)$.
\item Let $0\to B\to E^{0}\xrightarrow{\xi}E^{1}$ be an injective copresentation of $B$. $r(B)\in\A$ is cosilting with respect to $r(\xi)$ if and only if 
$B$ is so with respect to $\xi$ and $F'(B)\in\Cogen(B)$.
\end{enumerate}
\end{corollary}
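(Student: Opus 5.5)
Part (1) is a direct application of Theorem \ref{thm:3.3}(2). By Remark \ref{rmk:3.10}, if $e$ has a right adjoint $r$, then $(\mathcal{B},\mathcal{A},i,e,r)$ is also a cleft coextension. The functor $e$, being a left adjoint, preserves coproducts. For $i$: in the coextension structure, $i$ has a right adjoint $p$, so $i$ also preserves coproducts. Thus both hypotheses of Theorem \ref{thm:3.3} hold. Statement (1) is then exactly Theorem \ref{thm:3.3}(2), because $l$ sends the projective presentation $P_{1}\xrightarrow{\sigma}P_{0}\to B\to 0$ to a projective presentation of $l(B)$.

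For part (2) I would dualize the whole chain of arguments in Subsection 3.1 to the cleft coextension $(\mathcal{B},\mathcal{A},i,e,r)$.

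\textbf{Dual of Lemma \ref{lem:3.2}(1).} Since $(e,r)$ is an adjoint pair, the dual of Lemma \ref{lem:2.1}(1) shows that $\Hom_{\A}(A,r(\xi))$ is epic if and only if $\Hom_{\B}(e(A),\xi)$ is epic. Hence $A\in\mathcal{C}_{r(\xi)}$ if and only if $e(A)\in\mathcal{C}_{\xi}$.

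\textbf{Closure under products.} Because $e$ has a left adjoint $l$, it preserves products. Because $i$ has a left adjoint $q$, it also preserves products. So $\mathcal{C}_{r(\xi)}$ is closed under products if and only if $\mathcal{C}_{\xi}$ is, by the same transport argument used in Lemma \ref{lem:3.2}(2). The case of $i$ uses $ei\simeq\Id_{\B}$.

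\textbf{$r$ preserves injective copresentations.} The functor $r$ is left exact and preserves injectives, since it is right adjoint to the exact functor $e$. So $0\to r(B)\to r(E^0)\xrightarrow{r(\xi)}r(E^1)$ is an injective copresentation.

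\textbf{Dual of Theorem \ref{thm:3.3}(1).} The class $\mathcal{C}_{\xi}$ is closed under subobjects and extensions; this is the dual of the remark for $\mathcal{D}_\sigma$. Using the split sequence $er\simeq \Id_{\B}\oplus F'$, we get that $r(B)\in\mathcal{C}_{r(\xi)}$ if and only if $B$ and $F'(B)$ lie in $\mathcal{C}_\xi$.

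\textbf{Dual of Theorem \ref{thm:3.3}(2), forward direction.} Assume $\Cogen(r(B))=\mathcal{C}_{r(\xi)}$ and let $B'\in\mathcal{C}_\xi$. Then $i(B')\in\mathcal{C}_{r(\xi)}$, so there is a monomorphism $i(B')\to r(B)^{I}=r(B^{I})$, using that $r$ preserves products. Applying the left exact functor $p$ and using $pr\simeq\Id_\B$ gives a monomorphism $pi(B')\to B^{I}$. Precomposing with the unit $B'\to pi(B')$ of $(i,p)$ gives $B'\to B^{I}$. This unit is a monomorphism by the dual of Lemma \ref{lem:2.1}(2), since $i$ is faithful exact. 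Hence $B'\in\Cogen(B)$.

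\textbf{Dual of Theorem \ref{thm:3.3}(2), converse.} Let $A\in\mathcal{C}_{r(\xi)}$. Then $e(A)\in\Cogen(B)$, so there is a monomorphism $e(A)\to B^{J}$. Applying $r$ gives a monomorphism $re(A)\to r(B)^{J}$. Composing with the unit $A\to re(A)$, which is monic by the sequence $0\to \Id_\A\to re$, shows $A\in\Cogen(r(B))$.

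\textbf{Main obstacle.} The only delicate points are bookkeeping ones. First, one must check that $p$ exists and is left exact; this holds because it is a right adjoint. Second, one must check that products in $\Cogen$ are compatible with $r$ and $i$. Both of these follow from the existence of the respective adjoints, so I expect no genuine difficulty beyond careful dualization.
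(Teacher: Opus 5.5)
Your proposal is correct and follows the paper's route. Part (1) is Theorem~\ref{thm:3.3}(2) once Remark~\ref{rmk:3.10} guarantees that $i$ and $e$ preserve coproducts, and part (2) is the dual of Theorem~\ref{thm:3.3} for the cleft coextension $(\mathcal{B},\mathcal{A},i,e,r)$, which the paper merely cites and you actually spell out (the remaining inclusions $\Cogen(B)\subseteq\mathcal{C}_{\xi}$, $\Cogen(r(B))\subseteq\mathcal{C}_{r(\xi)}$ and $F'(B)\in\Cogen(B)$ follow directly from your dual of Theorem~\ref{thm:3.3}(1) together with your product-closure transfer).
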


\begin{proof}
The statements are followed from Theorem \ref{thm:3.3} and its dual.
\end{proof}

\section{Applications}

In this section, we will give some applications to $\theta$-extensions and tensor rings, 
which recover some known results about trivial ring extensions and triangular matrix rings.
In what follows, all rings are nonzero associative rings with identity and all modules are unitary. For a ring $R$, we write $\Mod R$ for the category of left $R$-modules. 

\begin{definition}{\rm (\cite{M93})}
{\rm Let $R$ be a ring, $M$ an $R$-$R$-bimodule and $\theta : M \otimes _{R} M \rightarrow M $ an
associative $R$-bimodule homomorphism. The {\it $\theta$-extension} of $R$ by $M$, denoted by
$R\ltimes _{\theta} M$, is defined to be the ring with underlying group $R \oplus M$ and multiplication
given as follows: $$(r, m)\cdot(r',m'):=(r r',r m'+mr'+\theta (m\otimes m')),$$
for any $r, r'
\in R$ and $m, m' \in M$.}
\end{definition}
Let $T:=R\ltimes _{\theta} M$.
Then a left $T$-module is identified with a pair
$(X,\alpha)$, where $X\in \Mod R$ and $\alpha \in \Hom _R(M\otimes_{R}X, X)$ such that $\alpha \circ (1_M\otimes \alpha)
=\alpha\circ(\theta \otimes 1_X)$.
Moreover, we have the following ring homomorphisms
$R\rightarrow T$ given by $r \mapsto (r, 0)$ and $T\rightarrow R$ given by $(r, m) \mapsto r$.
By \cite[Section 6.3]{KP25}, we have the following
cleft extension
of module categories
$$\xymatrix@!=8pc{ \Mod R \ar[r]|{i=_TR\otimes _R-} & \Mod T
			 \ar[r]|{e=_RT\otimes _T-} \ar@/_2pc/[l]|{q=_RR\otimes _T-} & \Mod R
			\ar@/_2pc/[l]|{l=_TT\otimes _R-} \ar@(ur,ul)_F    },  $$
where $q(X,\alpha)=\Coker \alpha$,
$i(Y)=(Y,0)$, $l(Y)=(Y\oplus M\otimes _R Y,\tiny {\left(\begin{array}{cc} 0 &0 \\ 1 & \theta \otimes 1_Y \end{array}\right)})$,
$e(X,\alpha)=X$ and $F(Y)=M\otimes _RY$.
Moreover, every cleft extension of module categories
is isomorphic to a cleft extension induced by a
$\theta$-extension, see \cite[Proposition 6.9]{KP25}.
It is easy to see that $e$ has a right adjoint functor $r=\Hom_{R}(T, -)$,
hence $(\Mod R,\Mod T, i,e,l)$ is also a cleft coextension.

Applying the foregoing results to $\theta$-extension,
we have the following two conclusions, which are generalizations of \cite[Theorem 3.3 and Corollary 3.5]{M22} and \cite[Theorem 4.3]{M22}, respectively,
since a trivial ring extension $R\ltimes M$ is a $\theta$-extension with $\theta=0$.


\begin{proposition} \label{prop:4.2}
\begin{enumerate}
\item Let $P_{1}\xrightarrow{\sigma}P_{0}\to Y\to 0$ be a projective presentation of a left $R$-module $Y$. Then $l(Y)\in\Mod T$ is (resp. partial) silting with respect to $l(\sigma)$ iff
$Y$ is so with respect to $\sigma$ and $M\otimes_{R}Y\in$ (resp. $\mathcal{D}_{\sigma})$ $\Gen(Y)$.
\item If $M$ is $m$-nilpotent for some $m$ and $\Tor_{t}^{R}(M, Y)=0$ for all $1\leq t\leq n+1$, 
then $l(Y)$ is (resp. partial) $n$-tilting iff $Y$ is so and $M\otimes_{R}Y\in Y^{\bot_{n}}$.
\end{enumerate}
\end{proposition}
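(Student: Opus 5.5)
The plan is to specialize Theorems~\ref{thm:3.3} and~\ref{thm:3.5} to the cleft extension $(\Mod R,\Mod T,i,e,l)$ attached to $T=R\ltimes_\theta M$, where $F=M\otimes_R-$.

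\emph{Part (1).} First I will check the coproduct hypotheses of Theorem~\ref{thm:3.3}. Here $i(Y)=(Y,0)$ and $e(X,\alpha)=X$. Both functors preserve coproducts directly, since coproducts in $\Mod T$ are computed on underlying $R$-modules. Alternatively, one can argue via Remark~\ref{rmk:3.10}, because $e$ has the right adjoint $r=\Hom_R(T,-)$. Next, $l(\sigma)$ is a projective presentation of $l(Y)$, as noted before Theorem~\ref{thm:3.3}. Theorem~\ref{thm:3.3}(1) then gives the partial silting case, with the condition $F(Y)=M\otimes_RY\in\mathcal{D}_\sigma$. Theorem~\ref{thm:3.3}(2) gives the silting case, with the condition $M\otimes_RY\in\Gen(Y)$.

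\emph{Part (2).} I will verify the hypotheses of Theorem~\ref{thm:3.5}.

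\textbf{Tor-vanishing.} Since $F=M\otimes_R-$, we have $\mathbb{L}_tF(Y)=\Tor_t^R(M,Y)$. The assumption therefore says exactly $\mathbb{L}_tF(Y)=0$ for $1\le t\le n+1$.

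\textbf{Partial case.} Theorem~\ref{thm:3.5}(1) applies immediately. No nilpotency is needed here.

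\textbf{Tilting case and $\Ker q=0$.} For Theorem~\ref{thm:3.5}(2) I must show $\Ker q=0$ when $M$ is $m$-nilpotent, i.e.\ the image of $\theta$-iterated products $M^{\otimes m}\to M$ vanishes. Equivalently, the two-sided ideal $0\oplus M$ of $T$ is nilpotent: its $m$-th power is zero. Indeed $(0,m)(0,m')=(0,\theta(m\otimes m'))$, so products of $m$ elements land in the image of the $m$-fold $\theta$-multiplication. Set $I=0\oplus M$. Then $q(X,\alpha)=\Coker\alpha\cong X/IX$, since $\alpha$ is the action of $M$ on $X$ and its image is $IX$. If $q(X)=0$ then $X=IX=I^2X=\cdots=I^mX=0$. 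Hence $\Ker q=0$, and Theorem~\ref{thm:3.5}(2) gives the $n$-tilting equivalence.

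The main obstacle is the interpretation of ``$m$-nilpotent''. I expect the intended meaning to be that the $\theta$-products $M\otimes_R\cdots\otimes_RM\to M$ of length $m$ vanish, which gives $I^m=0$ as above. If the definition is instead phrased through the functor $F$, namely $F^m=M^{\otimes m}\otimes_R-=0$, the same argument works with $I^m$ replaced by the image of $M^{\otimes m}$. In either reading, the case $\theta=0$ recovers the trivial extension, where $I^2=0$. The remaining checks are routine bookkeeping, and the step needing care is matching this definition to $I^mX=0$.
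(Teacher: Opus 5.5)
Your proposal is correct and follows the paper's route: part (1) is Theorem~\ref{thm:3.3} once $i$ and $e$ are seen to preserve coproducts (the paper gets this from Remark~\ref{rmk:3.10}), and part (2) is Theorem~\ref{thm:3.5} with $\mathbb{L}_tF(Y)=\Tor_t^R(M,Y)$ and $\Ker q=0$. The only difference is that the paper cites [M93, Lemma 2.3] for $\Ker q=0$, whereas you prove it directly: you identify $q(X,\alpha)=\Coker\alpha\cong X/IX$ for the nilpotent ideal $I=0\oplus M$ and iterate $X=IX=\cdots=I^mX=0$, which is a valid self-contained substitute under either reading of ``$m$-nilpotent''.
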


\begin{proof}
Since the cleft extension $(\Mod R,\Mod T, i,e,l)$ is also a cleft coextension, 
the statement (1) is a direct corollary of Theorem \ref{thm:3.3} and Remark \ref{rmk:3.10}.

(2) Since $M$ is $m$-nilpotent, so is $F$. This implies that  the natural transformation $\eta: F^{2}\to F$ is $m$-nilpotent. From \cite[Lemma 2.3]{M93} we know that $\Ker q=0$.
Thus the assertion is obtained from Theorem \ref{thm:3.5}.
\end{proof}

The second result follows from Theorem \ref{thm:3.8} and Remark \ref{rmk:3.10} immediately.

\begin{proposition} \label{prop:4.3}
Let $l(P_{1})\xrightarrow{\delta}l(P_{0})\to (X,\alpha)\to 0$ be a projective presentation of $(X,\alpha)\in\Mod T$. If $(X,\alpha)$ is (resp. partial) silting with respect to $\delta$, then $\Coker(\alpha)$ is a (resp. partial) silting left $R$-module with respect to $q(\delta)$.
\end{proposition}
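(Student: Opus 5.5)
This is a direct specialization of Theorem \ref{thm:3.8} to the cleft extension $(\Mod R,\Mod T,i,e,l)$ induced by the $\theta$-extension $T=R\ltimes_{\theta}M$. The plan is to check the hypotheses of that theorem in this setting and then identify $q(X,\alpha)$ with $\Coker(\alpha)$.

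The only hypothesis of Theorem \ref{thm:3.8} beyond the cleft extension structure is that $i$ preserves coproducts. There are two ways to see this. First, $e$ has the right adjoint $r=\Hom_R(T,-)$, so the cleft extension is also a cleft coextension, and Remark \ref{rmk:3.10} gives that $i$ (and $e$) preserve coproducts. Alternatively, one checks directly that $i(Y)=(Y,0)$ and that coproducts in $\Mod T$ are computed on underlying $R$-modules with the induced structure map. So $i({\coprod}_j Y_j)=(\coprod_j Y_j,0)\cong\coprod_j(Y_j,0)$.

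Next, the given presentation $l(P_1)\xrightarrow{\delta}l(P_0)\to(X,\alpha)\to0$ has exactly the form required in Theorem \ref{thm:3.8}, with $P_0,P_1$ projective $R$-modules. Applying the theorem, if $(X,\alpha)$ is (resp. partial) silting with respect to $\delta$, then $q(X,\alpha)$ is (resp. partial) silting with respect to $q(\delta)$. Here $q(\delta)$ is a map $P_1\to P_0$, because $ql\simeq\Id$.

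Finally, by the description of the cleft extension recalled from \cite[Section 6.3]{KP25}, $q={}_RR\otimes_T-$ sends $(X,\alpha)$ to $\Coker\alpha$. Indeed, $R\otimes_T X=X/MX$, and $MX$ is the image of $\alpha\colon M\otimes_R X\to X$. Hence $\Coker(\alpha)$ is (resp. partial) silting with respect to $q(\delta)$.

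No step is a genuine obstacle. The only point needing care is coproduct preservation by $i$, and the adjoint $r$ settles it.
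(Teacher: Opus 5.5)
Your proposal is correct and is exactly the paper's argument: the paper deduces this immediately from Theorem \ref{thm:3.8}, using Remark \ref{rmk:3.10} to see that $i$ preserves coproducts and the identification $q(X,\alpha)=\Coker(\alpha)$ from the description of the cleft extension. Your direct check that $i$ preserves coproducts, and your verification that $R\otimes_T X=X/MX$ with $MX$ the image of $\alpha$, confirm the same two points.
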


Let $R$ be a ring and $N$ a $R$-$R$-bimodule.
Recall that the {\it tensor ring} $T_R(N) =\oplus _{i=0}^{\infty}N^i$,
where $N^0 = R$ and $N^{i+1} = N \otimes _R N^{i}$ for $i \geq 0$. It follows from
\cite{KP25} that there is an isomorphism $T_R(N)\cong R\!\ltimes\!_{\theta}M'$ where $M'=N\oplus N^{\otimes 2}\oplus \cdots$ and $\theta$ is induced by $N^{\otimes k}\otimes_{\Gamma}N^{\otimes l}\rightarrow N^{\otimes (k+l)}$. Evidently, $M'$ is nilpotent if the same holds for $N$. Thus tensor ring is a special $\theta$-extension with $\theta \neq 0$. 

Applying two Propositions above to tensor rings, we can establish connections between silting and tilting modules over a tensor ring $T_R(N)$ and those over a ring $R$.

\begin{corollary} \label{cor:4.4}
Assume that $N$ is $m$-nilpotent and $P_{1}\xrightarrow{\sigma}P_{0}\to Y\to 0$ be a projective presentation of a left $R$-module $Y$.
\begin{enumerate}
\item $l(Y)$ is a  (resp. partial) silting left $T_R(N)$-module with respect to $l(\sigma)$ iff
$Y$ is so with respect to $\sigma$ and $N^{\otimes_R i}\otimes_{R}Y\in$ (resp. $\mathcal{D}_{\sigma})$ $\Gen(Y)$ for $1\leqslant i\leqslant m$.
\item  For all $1\leqslant i\leqslant m$, if $\Tor_{t}^{R}(N^{\otimes_R i}, Y)=0$ for all $1\leq t\leq n+1$, 
then $l(Y)$ is (resp. partial) $n$-tilting iff $Y$ is so and $N^{\otimes_R i}\otimes_{R}Y\in Y^{\bot_{n}}$.
\end{enumerate}
\end{corollary}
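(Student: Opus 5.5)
We specialize Proposition \ref{prop:4.2} to the $\theta$-extension $T_R(N)\cong R\ltimes_{\theta}M'$ with $M'=N\oplus N^{\otimes 2}\oplus\cdots$. Since $N$ is $m$-nilpotent, $N^{\otimes_R i}=0$ for $i>m$. Hence $M'=\bigoplus_{i=1}^{m}N^{\otimes_R i}$ is a finite direct sum and is nilpotent. Under this identification the functor $F=M'\otimes_R-$ satisfies
\[
F(Y)\cong\bigoplus_{i=1}^{m}N^{\otimes_R i}\otimes_R Y .
\]
This decomposition is the only new ingredient; each part then reduces to a closure property of the relevant class under finite direct sums and direct summands.

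For (1), Proposition \ref{prop:4.2}(1) shows that $l(Y)$ is (partial) silting with respect to $l(\sigma)$ if and only if $Y$ is (partial) silting with respect to $\sigma$ and $M'\otimes_R Y$ lies in $\Gen(Y)$ (respectively $\mathcal{D}_\sigma$). Both $\Gen(Y)$ and $\mathcal{D}_\sigma$ are closed under finite direct sums. They are also closed under direct summands, because each summand is an epimorphic image of the sum and both classes are closed under epimorphic images (for $\mathcal{D}_\sigma$ this is the remark in Section 2). Therefore $M'\otimes_R Y$ lies in the class exactly when every $N^{\otimes_R i}\otimes_R Y$, $1\le i\le m$, does.

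For (2), Tor commutes with finite direct sums, so $\Tor^R_t(M',Y)=\bigoplus_{i=1}^{m}\Tor^R_t(N^{\otimes_R i},Y)$. The hypothesis therefore gives $\Tor^R_t(M',Y)=0$ for $1\le t\le n+1$. Since $M'$ is nilpotent, Proposition \ref{prop:4.2}(2) applies. Finally, $\Ext^j_R(Y,-)$ is additive, so $M'\otimes_R Y\in Y^{\perp_n}$ if and only if each $N^{\otimes_R i}\otimes_R Y\in Y^{\perp_n}$.

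The only step that needs care is the identification of $F$ under the isomorphism $T_R(N)\cong R\ltimes_\theta M'$. We must check that the bimodule $M'$ used in \cite{KP25} is exactly $\bigoplus_{i\ge1}N^{\otimes i}$ with its natural left $R$-action, so that the tensor product distributes over the finite sum. We would cite \cite{KP25} for this identification. The rest is bookkeeping with additivity.
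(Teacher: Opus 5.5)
Your proposal is correct and is essentially the paper's argument. The paper states Corollary~\ref{cor:4.4} without a separate proof, obtaining it by applying Proposition~\ref{prop:4.2} to $T_R(N)\cong R\ltimes_{\theta}M'$ with $M'=N\oplus N^{\otimes 2}\oplus\cdots$ nilpotent. Your splitting of $M'\otimes_R Y$, $\Tor^R_t(M',Y)$ and $\Ext^j_R(Y,M'\otimes_R Y)$ over the finite sum $\bigoplus_{i=1}^{m}N^{\otimes_R i}$, together with closure of $\Gen(Y)$ and $\mathcal{D}_\sigma$ under finite direct sums and direct summands, supplies the bookkeeping the paper leaves implicit.
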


\begin{corollary} \label{cor:4.5}
Let $l(P_{1})\xrightarrow{\delta}l(P_{0})\to (X,\alpha)\to 0$ be a projective presentation of $(X,\alpha)\in\Mod T_R(N)$.
If $(X,\alpha)$ is (resp. partial) silting with respect to $\delta$, then $\Coker(\alpha)$ is a (resp. partial) silting left $R$-module with respect to $q(\delta)$.
\end{corollary}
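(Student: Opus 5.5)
This is Proposition~\ref{prop:4.3} applied to the tensor ring. By the discussion preceding Corollary~\ref{cor:4.4}, there is a ring isomorphism $T_R(N)\cong R\ltimes_{\theta}M'$ with $M'=N\oplus N^{\otimes 2}\oplus\cdots$ and $\theta$ induced by multiplication of tensor powers. My plan is to transport everything along this isomorphism and then invoke Proposition~\ref{prop:4.3}, which follows from Theorem~\ref{thm:3.8}.

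\textbf{Step 1: identify the categories.} The isomorphism gives an equivalence $\Mod T_R(N)\simeq\Mod(R\ltimes_\theta M')$. Under it a left $T_R(N)$-module corresponds to a pair $(X,\alpha)$ with $\alpha\colon M'\otimes_R X\to X$. The functors $i,e,l,q$ of the cleft extension attached to $R\ltimes_\theta M'$ correspond to the usual restriction and induction functors for the ring maps $R\to T_R(N)\to R$. In particular $q(X,\alpha)=\Coker\alpha$.

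\textbf{Step 2: check the hypotheses of Proposition~\ref{prop:4.3}.} The equivalence preserves projective presentations, the classes $\mathcal{D}_\delta$, $\Gen$, and coproducts. So if $(X,\alpha)$ is (partial) silting with respect to $\delta$ over $T_R(N)$, it is (partial) silting over $R\ltimes_\theta M'$ with respect to the corresponding map. Moreover $e$ has the right adjoint $\Hom_R(T,-)$, so by Remark~\ref{rmk:3.10} the functor $i$ preserves coproducts. Theorem~\ref{thm:3.8} therefore applies, and Proposition~\ref{prop:4.3} yields that $q(X,\alpha)=\Coker(\alpha)$ is (partial) silting with respect to $q(\delta)$.

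\textbf{Main obstacle.} The only real subtlety is bookkeeping. One must make sure the structure map $\alpha$ for $T_R(N)$, which is determined by the action of $M'$ (equivalently, by the action of $N$ together with its iterates), has the same image in $X$ as the map $\alpha$ used in the statement. This holds because the image of $M'\otimes X\to X$ equals the image of $N\otimes X\to X$: higher tensor powers act through $N$. Hence the cokernels coincide whichever description of $\alpha$ is used, and the statement is as claimed.

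Note that nilpotency of $N$ is not needed here, since Theorem~\ref{thm:3.8} requires no condition on $\Ker q$.
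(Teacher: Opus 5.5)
Your proposal is correct and follows the paper's route: the corollary comes from applying Proposition~\ref{prop:4.3} (that is, Theorem~\ref{thm:3.8} together with Remark~\ref{rmk:3.10}) through the isomorphism $T_R(N)\cong R\ltimes_\theta M'$. Your two additional remarks are also correct and fill in details the paper leaves implicit: the cokernel of the $M'$-action equals the cokernel of the $N$-action, and nilpotency of $N$ is not needed here.
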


Let $R$ be a finite dimensional algebra over an algebraically closed field.  We denote by $\tau$ the Auslander-Reiten translation in the category mod $R$ of finitely generated left $R$-modules, and denote by $|Y|$ the number of non-isomorphic indecomposable direct summands of $Y\in$ mod $R$. Recall from \cite{AIR14} that $Y\in$ mod $R$ is called {\it $\tau$-rigid}
if $\Hom_{R}(Y, \tau Y)=0$, and called  {\it $\tau$-tilting} if it is $\tau$-rigid and $|Y| = |R|$.
$Y$ is said to be  {\it support $\tau$-tilting} if there is an idempotent element $a$ of $R$ such that $Y$ is a $\tau$-tilting $R/RaR$-module.

The following result provides a method for constructing support $\tau$-tilting $R\ltimes _{\theta} M$-modules from the ones of $R$-modules, which extends the main result of \cite{LZ22} and \cite{GH20}.

\begin{corollary} \label{cor:4.6}
Let $T$ be a $\theta$-extension of $R$ by a finitely generated bimodule $M$, and $Y\in$ mod $R$. Then 
$l(Y)$ is a (resp. $\tau$-rigid) support $\tau$-tilting left $T$-module if and only if 
$Y$ is so and $\Hom_{R}(M\otimes_{R}Y, \tau Y)=0$.
\end{corollary}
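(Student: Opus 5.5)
We reduce Corollary \ref{cor:4.6} to Proposition \ref{prop:4.2}(1) through the dictionary of \cite{AIR14, AMV16} between finitely generated modules over a finite dimensional algebra and silting modules. Over a finite dimensional algebra $\Lambda$, a module $Y\in\mathrm{mod}\,\Lambda$ is support $\tau$-tilting exactly when it is silting with respect to a minimal projective presentation $\sigma$ in $\mathrm{mod}\,\Lambda$ \cite[Proposition 3.15]{AMV16}. It is $\tau$-rigid exactly when $Y\in\mathcal{D}_{\sigma}$ for such a $\sigma$, and by Auslander--Reiten duality this is in turn equivalent to $\Hom_{\Lambda}(Y,\tau Y)=0$. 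More generally, for a minimal projective presentation $P_1\xrightarrow{\sigma}P_0\to Y\to 0$ and any $Z\in\mathrm{mod}\,\Lambda$, we have $Z\in\mathcal{D}_{\sigma}$ iff $\Hom_{\Lambda}(Z,\tau Y)=0$ \cite[Proposition 2.4]{AIR14}. In the finitely generated setting, partial silting therefore amounts to $\tau$-rigid, since every $\tau$-rigid module is a direct summand of a support $\tau$-tilting one.

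The plan has three steps.

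\textbf{Step 1.} Since $M$ is finitely generated, $T$ is finite dimensional. The functors $l=T\otimes_R-$ and $e$ preserve finitely generated modules, so $l$ sends a minimal projective presentation $\sigma$ of $Y$ to a projective presentation $l(\sigma)$ of $l(Y)$ in $\mathrm{mod}\,T$.

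\textbf{Step 2.} Apply Proposition \ref{prop:4.2}(1). It gives that $l(Y)$ is (partial) silting with respect to $l(\sigma)$ iff $Y$ is so with respect to $\sigma$ and $M\otimes_R Y\in\mathcal{D}_{\sigma}$ (respectively $\Gen(Y)$). By the criterion above, $M\otimes_RY\in\mathcal{D}_\sigma$ is exactly $\Hom_R(M\otimes_RY,\tau Y)=0$. In the silting case, $\Gen(Y)=\mathcal{D}_\sigma$, so the two conditions coincide.

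\textbf{Step 3.} Translate back to $T$: $l(Y)$ is silting (resp. partial silting) with respect to $l(\sigma)$ iff $l(Y)$ is support $\tau$-tilting (resp. $\tau$-rigid). This gives the corollary.

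The main obstacle is that $l(\sigma)$ need not be a \emph{minimal} presentation of $l(Y)$, and the silting and $\tau$-rigidity notions of \cite{AMV16} depend on the chosen presentation. I would handle this with the observation that, for presentations in $\mathrm{mod}$, $\mathcal{D}_{\sigma}$ is unchanged when a split summand of the form $P\to P$ or $P\to 0$ is added, except that terms $P\to 0$ also impose $\Hom(P,-)=0$, which is the support condition given by an idempotent. A presentation $\sigma\oplus(P\xrightarrow{1}P)$ yields the same class. Hence whether $Y$ is silting is independent of the presentation as long as its $P\to 0$ summand is fixed, and $l$ carries a presentation of the form $\sigma_{\min}\oplus(P\to 0)$ to one of the same form over $T$. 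The claimed equivalences with $\tau$-rigid and support $\tau$-tilting should then hold for $l(\sigma)$ via \cite[Proposition 3.15]{AMV16}, with the idempotent for $l(Y)$ taken to be the image in $T$ of the idempotent for $Y$.

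A second point to check is that the Hom criterion $Z\in\mathcal{D}_\sigma\iff\Hom(Z,\tau Y)=0$ uses only finitely generated $Z$. This is harmless, since $M\otimes_RY$ is finitely generated.
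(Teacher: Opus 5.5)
Your route is the paper's own: take a minimal presentation $\sigma_{\min}$ of $Y$, use \cite[Proposition 2.4]{AIR14} and \cite[Proposition 3.15]{AMV16}, then apply Proposition~\ref{prop:4.2}(1). You are right that the argument depends on the presentation, which the paper's proof does not address. Your repair fails at two points, though, and neither can be patched, because the corollary as stated is false.

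\textbf{First gap: the support condition on $M\otimes_RY$ is dropped.} Suppose $(Y,P)$ is the support $\tau$-tilting pair and $P\neq0$. Then $Y$ is silting with respect to $\sigma=\sigma_{\min}\oplus(P\to0)$, not with respect to $\sigma_{\min}$. For this $\sigma$,
$$\mathcal{D}_\sigma\cap\operatorname{mod}R=\{Z:\Hom_R(Z,\tau Y)=0=\Hom_R(P,Z)\}.$$
So the condition $M\otimes_RY\in\Gen(Y)=\mathcal{D}_\sigma$ from Proposition~\ref{prop:4.2}(1) also requires $\Hom_R(P,M\otimes_RY)=0$. Step~2 applies the minimal-presentation criterion to a non-minimal $\sigma$ and loses this condition. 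Your last claim, that the idempotent of $P$ serves for $l(Y)$, loses it too, since $\Hom_T(l(P),l(Y))\cong\Hom_R(P,Y\oplus M\otimes_RY)$.

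The condition is not automatic. Take $R=k\times k$ and $M=k$ with $e_2$ acting on the left and $e_1$ on the right. Then $\theta=0$ and $T$ is the path algebra of $1\to2$. Here $Y=Re_1$ is support $\tau$-tilting and $\tau Y=0$, so the right-hand side holds. But $l(Y)=Te_1$ is sincere with $|l(Y)|=1<2=|T|$, so it is not support $\tau$-tilting.

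\textbf{Second gap: $l$ need not preserve the form of the presentation.} In general
$$l(\sigma_{\min})\cong\delta_{\min}\oplus(Q\xrightarrow{1}Q)\oplus(Q'\to0),$$
where $\delta_{\min}$ is minimal for $l(Y)$. Since $ql(\sigma_{\min})\cong\sigma_{\min}$ has no summand $X\to0$ with $X\neq0$, you only get $q(Q')=0$, i.e.\ $Q'=MQ'$. Nakayama then kills $Q'$ when $M\subseteq\operatorname{rad}T$, but not otherwise. When $Q'\neq0$, $\tau$-rigidity of $l(Y)$ (membership in $\mathcal{D}_{\delta_{\min}}$) no longer gives $l(Y)\in\mathcal{D}_{l(\sigma_{\min})}$, which is what Proposition~\ref{prop:4.2}(1) needs.

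A concrete counterexample:
\begin{itemize}
\item Let $R$ be the path algebra of linear $A_3$, labelled so that $S_1$ is simple injective and $\tau S_1=S_2$.
\item Let $V=S_1\oplus S_2$ and $\varphi\colon R\to B=\mathrm{End}_k(V)$ the representation map.
\item Let $M=B$ as a bimodule via $\varphi$, with $\theta$ the multiplication. Then $(r,b)\mapsto(r,\varphi(r)+b)$ gives $T\cong R\times B$, and $l(Y)=(Y,B\otimes_RY)$.
\item For $Y=S_1$, $B\otimes\sigma_{\min}=0$, so $l(\sigma_{\min})$ acquires the summand $(0,V)\to0$.
\item Also $l(Y)=(S_1,V)$ and $\tau_Tl(Y)=(S_2,0)$, so $l(Y)$ is $\tau$-rigid.
\item However $M\otimes_RY\cong S_1\oplus S_2$ as $R$-modules, so $\Hom_R(M\otimes_RY,\tau Y)\neq0$.
\end{itemize}
With $Y=DR$ instead, $l(Y)$ is even $\tau$-tilting, so this direction of the support version fails as well.

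\textbf{What does survive.} The ``if'' part of the $\tau$-rigid version holds in general. When $M\subseteq\operatorname{rad}T$ (e.g.\ trivial extensions), both of the following hold:
\begin{itemize}
\item $l(Y)$ is $\tau$-rigid if and only if $Y$ is $\tau$-rigid and $\Hom_R(M\otimes_RY,\tau Y)=0$;
\item $(l(Y),l(P))$ is a support $\tau$-tilting pair if and only if $(Y,P)$ is one and $\Hom_R(M\otimes_RY,\tau Y)=0=\Hom_R(P,M\otimes_RY)$.
\end{itemize}
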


\begin{proof}
Suppose that $P_{1}\xrightarrow{\sigma}P_{0}\to Y\to 0$ is a minimal projective presentation of $Y$. Then
$M\otimes_{R}Y\in\mathcal{D}_{\sigma}$ if and only if $\Hom_{R}(M\otimes_{R}Y, \tau Y)=0$ by \cite[Proposition 2.4]{AIR14}.
From \cite[Proposition 3.15]{AMV16} we know that $Y$ is (resp. $\tau$-rigid) support $\tau$-tilting  if and only if it is (resp. partial) silting. The assertion is obtained from Proposition \ref{prop:4.2}(1).
\end{proof}

The first statement in the next result generalizes\cite[Proposition 3.3]{LZ22}, and the second one is a new result.

\begin{corollary} \label{cor:4.7}
Let $T$ be a $\theta$-extension of $R$ by a finitely generated bimodule $M$, and $(X,\alpha)$ be a left $R\ltimes_{\theta}M$-module.
\begin{enumerate} 
\item If $(X,\alpha)$ is $\tau$-rigid, then $\Coker(\alpha)$ is a $\tau$-rigid left $R$-module.
\item If $(X,\alpha)$ is support $\tau$-tilting, then $\Coker(\alpha)$ is a support $\tau$-tilting left $R$-module.
\end{enumerate}
\end{corollary}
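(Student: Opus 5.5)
The plan is to reduce Corollary \ref{cor:4.7} to Proposition \ref{prop:4.3}, exactly as Corollary \ref{cor:4.6} was reduced to Proposition \ref{prop:4.2}(1). The bridge between the two languages is \cite[Proposition 3.15]{AMV16}. Over a finite dimensional algebra, a finitely generated module is $\tau$-rigid (resp. support $\tau$-tilting) if and only if it is partial silting (resp. silting) with respect to its minimal projective presentation. Here we apply this over both $R$ and $T=R\ltimes_{\theta}M$. Since $M$ is finitely generated, $T$ is again a finite dimensional algebra, so the statement applies to $T$.

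\textbf{Step 1 (presentation of the right form).} Let $(X,\alpha)\in$ mod $T$ be $\tau$-rigid (resp. support $\tau$-tilting). Take a projective presentation $Q_{1}\xrightarrow{\delta'}Q_{0}\to (X,\alpha)\to 0$ in mod $T$ with respect to which $(X,\alpha)$ is partial silting (resp. silting). We want it in the form $l(P_{1})\xrightarrow{\delta}l(P_{0})$ required by Proposition \ref{prop:4.3}. By Lemma \ref{lem:3.1} each $Q_{j}$ is a summand of some $l(P_{j})$, say $l(P_{j})=Q_{j}\oplus Q_{j}'$. Moreover $T=l(R)$, so every finitely generated projective $T$-module is $l$ of a finitely generated projective $R$-module.

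We then modify the presentation as in Lemma \ref{lem:3.7}. Replace $\delta'$ by $\delta=\delta'\oplus \mathrm{id}_{Q_{0}'}\oplus 0$, with the summand $Q_{1}'$ mapped to $0$, so that source and target become $l(P_{1})$ and $l(P_{0})$. The class $\mathcal{D}$ is unchanged by these additions. Adding an identity summand contributes only a surjective component to $\Hom(\delta,L)$. Adding a zero map out of a projective $Q$ adds the condition that $\Hom(Q,L)\to 0$ be onto, which always holds. So $\mathcal{D}_{\delta}=\mathcal{D}_{\delta'}$, the cokernel is still $(X,\alpha)$, and $(X,\alpha)$ remains (partial) silting with respect to $\delta$.

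\textbf{Step 2 (apply the proposition).} Since $(\Mod R,\Mod T,i,e,l)$ is also a cleft coextension, Proposition \ref{prop:4.3} applies. It shows that $q(X,\alpha)=\Coker(\alpha)$ is (partial) silting with respect to $q(\delta)\colon P_{1}\to P_{0}$, and $q(\delta)$ is a projective presentation of $\Coker(\alpha)$ in mod $R$.

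\textbf{Step 3 (translate back, the main obstacle).} We must conclude from Step 2 that $\Coker(\alpha)$ is $\tau$-rigid (resp. support $\tau$-tilting). This needs \cite[Proposition 3.15]{AMV16} for an arbitrary, not necessarily minimal, presentation. A finite projective presentation decomposes as the minimal one plus trivial summands of the forms $P\xrightarrow{1}P$ and $P\to 0$.

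\emph{Summands $P\xrightarrow{1}P$} do not change $\mathcal{D}$.

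\emph{Summands $P\to 0$} can only shrink $\mathcal{D}$, since they add the condition $\Hom(P,L)=0$. Hence $\mathcal{D}_{q(\delta)}\subseteq\mathcal{D}_{\sigma_{\min}}$, where $\sigma_{\min}$ is the minimal presentation.

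For the partial case this suffices. We get $\Coker(\alpha)\in\mathcal{D}_{q(\delta)}\subseteq\mathcal{D}_{\sigma_{\min}}$, and by \cite[Proposition 2.4]{AIR14} this is exactly $\tau$-rigidity.

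For the silting case, silting with respect to $q(\delta)$ means $\Gen(\Coker(\alpha))=\mathcal{D}_{q(\delta)}=\mathcal{D}_{\sigma_{\min}}\cap P^{\perp_{0}}$, where $P^{\perp_{0}}$ denotes the modules $L$ with $\Hom(P,L)=0$. This is precisely a silting object in the sense of \cite{AMV16}, whose presentation is allowed to carry such a summand; it is the role of the idempotent $a$ in the definition of support $\tau$-tilting. So \cite[Proposition 3.15]{AMV16}, applied to this presentation, gives that $\Coker(\alpha)$ is support $\tau$-tilting.
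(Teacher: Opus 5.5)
Your overall plan is the paper's own: Proposition~\ref{prop:4.3} combined with \cite[Proposition 3.15]{AMV16}. Step~3 usefully makes explicit what the paper's one-line proof leaves implicit. The gap is Step~1, where you force the presentation into the shape $l(P_1)\to l(P_0)$.

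A summand $Q_1'$ added to the \emph{source} and mapped to $0$ contributes the component $0\to\Hom_T(Q_1',L)$ to $\Hom_T(\delta,L)$, not $\Hom_T(Q_1',L)\to 0$. So it imposes $\Hom_T(Q_1',L)=0$, and $\mathcal{D}_{\delta}=\mathcal{D}_{\delta'}\cap\{L:\Hom_T(Q_1',L)=0\}$. This is exactly what you yourself say about summands $P\to 0$ in Step~3, and it can expel $(X,\alpha)$ from $\mathcal{D}_\delta$. Two further problems: your padded source is $l(P_1)\oplus Q_0'$, not $l(P_1)$, and it is false in general that every finitely generated projective $T$-module has the form $l(P)$.

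This cannot be repaired inside mod $T$. Take $R=M=k$ with $\theta$ the multiplication. Then $T\cong k\times k$ with simples $S_0=(k,0)$ and $S_1=(k,1)$, and $l(k^n)\cong S_0^n\oplus S_1^n$, so $S_0$ is projective but not of the form $l(P)$. Being projective, $S_0$ is $\tau$-rigid and support $\tau$-tilting. Now let $\delta\colon l(k^b)\to l(k^a)$ have cokernel $S_0$. Its $S_1$-component must be onto, so $b\geq a$. On the other hand, $\Hom_T(\delta,S_0)$ is the transpose of the $S_0$-component, so $S_0\in\mathcal{D}_\delta$ forces that component to be injective with cokernel $k$, giving $b=a-1$. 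Hence no finitely generated presentation of the shape demanded by Proposition~\ref{prop:4.3} makes $S_0$ even partial silting. (The paper's terse proof glosses over the same point.)

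The cleanest fix is to notice that the $l$-shape is not needed. Let $\delta'$ be \emph{any} projective presentation of $(X,\alpha)$.
\begin{itemize}
\item $q(\delta')$ is a projective presentation of $\Coker(\alpha)$, since $q$ is right exact and preserves projectives, being left adjoint to the exact functor $i$.
\item Lemma~\ref{lem:3.6} holds for arbitrary morphisms. As in Theorem~\ref{thm:3.8}, this gives closure of $\mathcal{D}_{q(\delta')}$ under coproducts and, in the silting case, $\mathcal{D}_{q(\delta')}\subseteq\Gen(\Coker(\alpha))$.
\item $\Coker(\alpha)\in\mathcal{D}_{q(\delta')}$, because $iq(X,\alpha)=(\Coker(\alpha),0)$ is a quotient of $(X,\alpha)$ and $\mathcal{D}_{\delta'}$ is closed under quotients.
\end{itemize}

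Alternatively, keep Proposition~\ref{prop:4.3} but work in $\Mod T$ and pad only with identities. By the Eilenberg swindle $Q_j\oplus T^{(\mathbb{N})}\cong T^{(\mathbb{N})}=l(R^{(\mathbb{N})})$, so $\delta'\oplus\mathrm{id}_{T^{(\mathbb{N})}}$ has the right shape, the same cokernel and the same $\mathcal{D}$. Then $q(\delta)$ has infinitely generated terms, but your Step~3 survives because $R$ is perfect. The splitting into $\sigma_{\min}$ plus trivial summands still holds. Moreover, $\{L:\Hom_R(P,L)=0\}$ depends only on which indecomposable projectives occur in $P$, so $P$ can be replaced by a finitely generated one.

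With either fix, the rest of your argument goes through.
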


\begin{proof}
It follows from Proposition \ref{prop:4.3} and \cite[Proposition 3.15]{AMV16}.
\end{proof}

\section*{Acknowledgments}

This research was partially supported by the National Natural Science Foundation of China (Grant No. 12061026) and the Key Research Projects Plan of Higher Education Institutions in Henan Province (Grant No. 26B110007).


\end{document}